\newtheorem{thm}{Theorem}[section]
\newtheorem{theorem}[thm]{Theorem}
\newtheorem{main theorem}[thm]{Main Theorem}
\newtheorem{corollary}[thm]{Corollary}
\newtheorem{lemma}[thm]{Lemma}
\newtheorem{conjecture}[thm]{Conjecture}
\theoremstyle{definition}
\newcommand{\vb}{\mathbf{v}} 
\DeclareMathOperator{\cl}{cl}
\DeclareMathOperator{\IT}{IT}
\title{Reconfiguration of Independent Transversals}
\author{
Pjotr Buys\thanks{Korteweg--de Vries Institute for Mathematics, University of Amsterdam, Netherlands. Email: \protect\href{mailto:pjotr.buys@gmail.com}{\protect\nolinkurl{pjotr.buys@gmail.com}}, \protect\href{mailto:r.kang@uva.nl}{\protect\nolinkurl{r.kang@uva.nl}}.}
\and
Ross J. Kang\footnotemark[1]
\and
Kenta Ozeki\thanks{Faculty of Environment and Information Sciences, Yokohama National University, Japan. 
Email: \protect\href{mailto:ozeki-kenta-xr@ynu.ac.jp}{\protect\nolinkurl{ozeki-kenta-xr@ynu.ac.jp}}.}
}
\begin{document}
\maketitle

\begin{abstract}
Given integers $\Delta\ge 2$ and $t\ge 2\Delta$, suppose there is a graph of maximum degree $\Delta$ and a partition of its vertices into blocks of size at least $t$. By a seminal result of Haxell, there must be some independent set of the graph that is transversal to the blocks, a so-called independent transversal. We show that, if moreover $t\ge2\Delta+1$, then every independent transversal can be transformed within the space of independent transversals to any other through a sequence of one-vertex modifications, showing connectivity of the so-called reconfigurability graph of independent transversals.

This is sharp in that for $t=2\Delta$ (and $\Delta\ge 2$) the connectivity conclusion can fail. In this case we show furthermore that in an essential sense it can only fail for the disjoint union of copies of the complete bipartite graph $K_{\Delta,\Delta}$. This constitutes a qualitative strengthening of Haxell's theorem.
\end{abstract}
\section{Introduction}

A common refrain in combinatorics is that once you have one you have many, that is to say, combinatorial objects of a desired structure. The phenomenon, going back at least to unpublished work in 1941 of Rademacher (see~\cite{Erd55}), is referred to as {\em supersaturation} and may be observed in many, especially extremal, combinatorial contexts. We pursue a similar line that, rather than multitude, asks instead for topological structure in the space of desired combinatorial objects. Our study sits in a broader programme called {\em combinatorial reconfiguration}, see~\cite{Heu13survey}, an area that was originally motivated by various Markov chains arising in theoretical computer science. Despite there being a conceptual link between the two areas through discrete probability, there has been surprisingly little exploration of reconfiguration in the context of extremal combinatorics. Here we take a marked step in this direction.

The background for the result we build upon began in a dialogue between Erd\H{o}s and Woodall at a problem session of the 3rd British Combinatorial Conference at Merton College, Oxford in 1972 (see~\cite{Erd72,BES75}). There arose the following problem.
Given $\Delta\ge 2$, what is the least $t=t(\Delta)$ such that the following holds: for any graph of maximum degree $\Delta$ and any partition of its vertices into blocks each of order at least $t$, there is guaranteed to exist some independent set that is transversal to the vertex-partition?
In a notable application of topological methods, solving a conjecture of Bollob\'as, Erd\H{o}s and Szemer\'edi~\cite{BES75}, Haxell~\cite{Haxell1995,Haxell2001} resolved this problem by showing that a choice of $t=2\Delta$ suffices.
It is interesting to note, moreover, that for every $\Delta\ge 2$ the choice $t=2\Delta-1$ is {\em not} sufficient, as shown by Szab\'o and Tardos~\cite{SzTa06}.
As such, Haxell's result is {\em exactly} extremal, marking out its importance in (extremal) combinatorics.

Given this sharpness, it might be natural to ask for {\em more} in a sense alluded to in the first paragraph. What conditions guarantee connectedness in the space of independent transversals of a given vertex-partitioned graph? 
We give a sharp answer to this question that is analogous to Haxell's theorem.
We show that, if $t\ge2\Delta+1$, then every independent transversal (of some graph of maximum degree $\Delta$ vertex-partitioned into blocks of size at least $t$) can be modified one vertex at a time  to obtain any other, all the while remaining within the space of independent transversals. This result is also {\em exactly} extremal in that there are examples of vertex-partitioned $\Delta$-regular graphs that show how the assertion fails when $t=2\Delta$.
Below, we state a more precise result which implies the above statements and also gives a partial characterisation of  connectivity in the space of independent transversals at the threshold $t=2\Delta$.

Before that let us present our findings again in a different, perhaps more evocative way. 
Let $V(H)$ be a collection of particles and suppose each particle can take one of $t$ spins chosen from $[t]=\{1,\dots,t\}$. Suppose that the admissibility of a spin configuration on $V(H)$ is governed (on the basis of hard-core interactions) by some graph $G$ on the vertex set $V(G)=V(H)\times[t]$: 
we have an edge of $G$ between $(u,i)$ and $(v,j)$ whenever $u\ne v$ and spin $i$ on particle $u$ is incompatible with spin $j$ on particle $v$.
Equivalently, every admissible configuration is an independent set of $G$ that takes each particle of $V(H)$ in a pair exactly once. 
Consider the following Markov chain Monte Carlo algorithm which runs over the state space consisting of all admissible configurations on $V(H)$.
\begin{quote}
Initialise $X_0\subseteq V(G)$ as an arbitrary admissible configuration on $V(H)$.

Perform the following for $s=0,1,\dots$
\begin{quote} 
Choose some vertex $(u,i)$ of $G$ uniformly at random.

If $(u,i)$ is adjacent in $G$ to some vertex of $X_s$, then go directly to iteration $s+1$.

Else, let $X_{s+1}$ be the admissible configuration formed from $X_s$ by including $(u,i)$ (and so excluding the vertex of $X_s$ that contains the particle $u$).
\end{quote}
\end{quote}
While Haxell's theorem guarantees the state space of this Markov chain to be nonempty (so that $X_0$ is well-defined) provided $t\ge 2\Delta(G)$, where $\Delta(G)$ denotes the maximum degree of $G$, our main result guarantees the Markov chain to be ergodic provided $t\ge 2\Delta(G)+1$.

We remark that Glauber dynamics on the proper $t$-colourings of some graph $H$ may be formulated in a similar way as above. The ergodicity of the corresponding Markov chain provided $t\ge \Delta(H)+2$ is an exercise, as is non-emptiness of its state space when $t\ge \Delta(H)+1$; see~\cite{Jer95} for more context.

We next introduce some notation in order to state our main result more carefully, as well as set up terminology for the proof.
Let $G$ be a graph and let $\mathcal{U}$ be a partition of the vertex set $V(G)$. An independent transversal of the pair $(G,\mathcal{U})$ is an independent set $S$ of $G$ such that $|S \cap U| = 1$ for all $U \in \mathcal{U}$. The partition $\mathcal{U}$ is said to be \emph{$t$-thick} if the cardinality of each block $U \in \mathcal{U}$ satisfies $|U| \geq t$. Here is a restatement of Haxell's theorem in this notation.

\begin{theorem}[\cite{Haxell1995,Haxell2001}]
    \label{thm: Haxell}
    Let $G$ be a graph of maximum degree $\Delta$ and let $\mathcal{U}$ be a $(2\Delta)$-thick partition of $V(G)$. Then there exists an independent transversal of $(G,\mathcal{U})$.
\end{theorem}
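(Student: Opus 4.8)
The plan is to follow an augmenting-type argument, in the spirit of a deficiency Hall/SDR proof, that iteratively enlarges a partial independent transversal. Call a set $S\subseteq V(G)$ a \emph{partial transversal} if $S$ is independent in $G$ and $|S\cap U|\le 1$ for every $U\in\mathcal{U}$, and write $\mathrm{dom}(S)=\{U\in\mathcal{U}:S\cap U\ne\emptyset\}$ for the family of blocks it meets. Since deleting vertices from overfull blocks keeps the partition $(2\Delta)$-thick and cannot increase the maximum degree, while an independent transversal of the reduced instance is one of the original, I may assume every block has size exactly $2\Delta$. Suppose, for contradiction, that $(G,\mathcal{U})$ has no independent transversal, and choose a partial transversal $S$ with $|\mathrm{dom}(S)|$ maximum. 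Put $\mathcal{W}=\mathrm{dom}(S)\subsetneq\mathcal{U}$ and fix a block $U_0\in\mathcal{U}\setminus\mathcal{W}$; the aim is to build a partial transversal dominating $\mathcal{W}\cup\{U_0\}$, contradicting maximality.

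The core construction grows, starting from $U_0$, an increasing family $\mathcal{T}=\{U_0,U_1,\dots,U_r\}$ of distinct blocks with $U_1,\dots,U_r\in\mathcal{W}$, where $w_i$ denotes the unique vertex of $S$ in $U_i$. At each stage, set $S_r=S\setminus\{w_1,\dots,w_r\}$, which is a partial transversal dominating $\mathcal{W}\setminus\{U_1,\dots,U_r\}$, and consider the graph obtained from $G\bigl[\,\bigcup_{i=0}^{r}U_i\,\bigr]$ by deleting every neighbour of $S_r$. One then tries to find vertices $v_0\in U_0,\dots,v_r\in U_r$, pairwise non-adjacent and none adjacent to $S_r$: if this succeeds, then $S_r\cup\{v_0,\dots,v_r\}$ is a partial transversal dominating $\mathcal{W}\cup\{U_0\}$ and we are done. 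If it fails, a Hall-type analysis localised to these blocks exposes a block $W\in\mathcal{W}\setminus\mathcal{T}$ that is ``responsible'' for the obstruction; we append $W$ (and $S\cap W$) to $\mathcal{T}$ and repeat. As the appended blocks are distinct and $\mathcal{U}$ is finite, the process cannot extend indefinitely, so it must terminate with a successful augmentation — the desired contradiction.

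The heart of the proof, and the step I expect to be the main obstacle, is making this dichotomy airtight: showing that a failure to choose the $v_i$'s always yields a genuinely new block to add, and that the growing family $\mathcal{T}$ never ``runs dry.'' This is precisely where the hypothesis $|U|\ge 2\Delta$ is used, and used tightly. Each block brought into play contributes $2\Delta$ vertices, whereas the single vertex $w_i\in S$ removed from it can, through its at most $\Delta$ neighbours, spoil at most $\Delta$ vertices in the blocks of $\mathcal{T}$; so every new block leaves a surplus of at least $\Delta$ usable vertices over the damage it inflicts. Tracking this surplus — in effect a deficiency version of Hall's condition in which the set to be covered grows by $2\Delta$ per round while the forbidden neighbourhoods grow by at most $\Delta$ — forces Hall's condition to hold after finitely many rounds, at which point the required selection $v_0,\dots,v_r$ exists. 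Getting the bookkeeping exactly right, and correctly formalising which block to append (the natural structure is branching, since a single $v_0$ may conflict with several vertices of $S$ at once), is the delicate part; the remainder is routine. It is also instructive that the argument is sharp: the per-block surplus collapses once $|U|$ falls below $2\Delta$, matching the Szab\'o--Tardos examples showing that $t=2\Delta-1$ does not suffice.

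As an alternative, one could instead deduce the theorem from the topological Hall theorem of Aharoni and Haxell together with the estimate that the independence complex of a graph on $n$ vertices with maximum degree $\Delta$ is roughly $\bigl(n/(2\Delta)-2\bigr)$-connected: applied to each sub-union $\bigcup_{U\in\mathcal{I}}U$ of blocks, which has at least $2\Delta|\mathcal{I}|$ vertices and hence topological connectivity at least $|\mathcal{I}|-2$, this immediately produces the required system of representatives. I would nonetheless prefer the self-contained combinatorial route sketched above, since it keeps the whole argument elementary and exposes exactly why the constant $2\Delta$ appears.
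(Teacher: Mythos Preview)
Your overarching plan---grow a family of ``opened'' blocks from an uncovered $U_0$ and use the per-block surplus $2\Delta-\Delta=\Delta$ to force eventual augmentation---is in the same spirit as the paper's proof, and your counting heuristic captures the right intuition. But there is a genuine gap in the step where you ``try to find vertices $v_0\in U_0,\dots,v_r\in U_r$, pairwise non-adjacent and none adjacent to $S_r$.'' This is itself an independent-transversal problem on the sub-instance induced on $U_0\cup\cdots\cup U_r$ with the $N(S_r)$-vertices removed, and those reduced blocks need not be $(2\Delta)$-thick; so you cannot appeal to Hall's theorem (which governs systems of \emph{distinct} representatives, not independent ones), nor to the theorem being proved. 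Your count shows that many vertices survive the deletion, but it says nothing about the edges \emph{among} those survivors, which is precisely the obstruction to selecting them independently. Consequently the dichotomy ``either a local independent transversal exists, or failure exposes a single new block $W$'' is not established.

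The paper avoids this by never attempting a transversal of the opened blocks. It works with an \emph{augmenting sequence} of single vertices $v_1,\dots,v_m$, each lying in an already-opened block and avoiding the closed neighbourhood of all predecessors; each $v_k$'s neighbours in $S$ open new blocks. Progress is driven by a lexicographic potential on the degree sequence $(|N_S(v_1)|,\dots,|N_S(v_m)|)$ padded with $\infty$'s: minimality forces $|N_S(v_m)|=0$, which triggers a one-vertex swap $S\mapsto S\oplus v_m$ that either enlarges $S$ or strictly decreases an earlier entry. The $2\Delta$ hypothesis enters only in the elementary count $|V_{\mathcal{B}_m}|>\Delta\,|C_m|$ showing the sequence can be extended whenever all $|N_S(v_k)|\ge1$---no nested independent-transversal search is needed. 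Your parenthetical about a ``branching'' structure is on the right track, but the object to grow is a tree of \emph{vertices}, not of blocks. Your alternative topological route via Aharoni--Haxell is valid, though it is a genuinely different argument from the paper's elementary descent.
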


We denote the set of independent transversals of a pair $\mathcal{G} = (G,\mathcal{U})$ by $\IT(\mathcal{G})$. We define an adjacency relation on $\IT(\mathcal{G})$ by stipulating that $S \sim T$ for $S,T \in \IT(\mathcal{G})$ if and only if $S \cup T$ is an independent set of size $|\mathcal{U}|+1$. In other words, $S$ and $T$ are adjacent if and only if they agree on all but one block $U \in \mathcal{U}$ and on $U$ their respective vertices are not adjacent in $G$ (this seemingly extraneous condition will be important to the argument). We call the graph on vertex set $\IT(\mathcal{G})$ defined by this adjacency relation the \emph{reconfigurability graph} of $\mathcal{G}$. We say that $S$ is \emph{reconfigurable} to $T$ if $S$ and $T$ lie in the same connected component of the reconfigurability graph.

Given two graph-partition pairs $\mathcal{G}_1 = (G_1, \mathcal{U}_1)$ and $\mathcal{G}_2 = (G_2, \mathcal{U}_2)$ we define their union as $\mathcal{G}_1 \cup \mathcal{G}_2 = (G_1 \sqcup G_2, \mathcal{U}_1 \cup \mathcal{U}_2)$, where $\sqcup$ denotes disjoint graph union. Observe that there is a natural bijection between $\IT(\mathcal{G}_1 \cup \mathcal{G}_2)$ and $\IT(\mathcal{G}_1) \times \IT(\mathcal{G}_2)$ and that $(S_1, S_2)$ is reconfigurable to $(T_1, T_2)$ within $\mathcal{G}_1 \cup \mathcal{G}_2$ if and only if $S_1$ is reconfigurable to $T_1$ within $\mathcal{G}_1$ and $S_2$ is reconfigurable to $T_2$ within $\mathcal{G}_2$. Therefore, when studying reconfigurablity, we will often assume that our graph-partition pairs $\mathcal{G}$ cannot be written as $\mathcal{G}_1 \cup \mathcal{G}_2$ for nontrivial $\mathcal{G}_1,\mathcal{G}_2$. In such cases, we refer to $\mathcal{G}$ as \emph{irreducible}.

We are now prepared to fully state our main theorem.

\begin{restatable}{theorem}{mainTheorem}
    \label{thm: main}
Let $G$ be a graph of maximum degree $\Delta$ and let $\mathcal{U}$ be a $(2\Delta)$-thick partition of $V(G)$ for which $\mathcal{G} = (G,\mathcal{U})$ is irreducible. If $G$ is not a disjoint union of $|\mathcal{U}|$ copies of $K_{\Delta,\Delta}$, then the reconfigurability graph of $\mathcal{G}$ is connected.
\end{restatable}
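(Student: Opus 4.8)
The plan is to adapt Haxell's topological/combinatorial machinery — in particular the "critical subgraph" / alternating-path technique underlying her inductive proof of Theorem~\ref{thm: Haxell} — to the reconfiguration setting. Fix $\mathcal{G} = (G,\mathcal{U})$ as in the statement and let $S, T \in \IT(\mathcal{G})$. I want to show $S$ is reconfigurable to $T$. The natural induction is on $|S \triangle T|$, or equivalently on the number of blocks on which $S$ and $T$ disagree; if this is $0$ we are done, so assume $S \neq T$. The key is to find \emph{some} single block $U$ on which we can move $S$ to agree with $T$ (i.e.\ replace the vertex $S\cap U$ by the vertex $T\cap U$), possibly after first performing a bounded sequence of preparatory reconfiguration moves that do not increase the disagreement with $T$ on the other blocks. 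Haxell's lemma gives, for any block $U$ and any target vertex $w \in U$, a way to "rotate" an independent transversal toward using $w$: one walks along a sequence of blocks, at each step swapping in a new vertex, where the obstruction to completing the walk is controlled by the degree bound. Under $(2\Delta)$-thickness each block has enough room ($|U|\ge 2\Delta$) that these rotations can be carried out; the subtlety is ensuring the rotation can be \emph{completed} rather than getting stuck, and this is exactly where the exceptional graph $K_{\Delta,\Delta}$ — and irreducibility — must enter.

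More concretely, I would set up an auxiliary \emph{reconfiguration-with-respect-to-$T$} argument: call a block $U$ \emph{bad} if $S\cap U \neq T\cap U$ and $S\cap U$ is adjacent to $T\cap U$ in $G$ (so we cannot directly swap), and \emph{good} otherwise. If some disagreeing block is good, swap there and recurse on a strictly smaller disagreement. If every disagreeing block is bad, I want to perform a rotation to convert some bad block into a good one. Consider a disagreeing block $U_0$ with $s_0 = S\cap U_0$, $t_0 = T\cap U_0$, $\{s_0,t_0\}\in E(G)$. Build a maximal alternating structure: $t_0$ has $\le \Delta$ neighbours, so at most $\Delta - 1$ blocks other than $U_0$ contain a neighbour of $t_0$ in $S$; on each such block try to reconfigure $S$ away from that neighbour. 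Iterating this builds a Haxell-style "tree of blocks" $B$ rooted at $U_0$, and the counting argument (each internal vertex-choice has $\le \Delta$ forbidden neighbours, each block has $\ge 2\Delta$ vertices, and one counts edges leaving $\bigcup B$) shows that either the process terminates successfully — yielding a sequence of moves after which we may legally swap on some block, reducing the disagreement — or we reach a configuration in which the induced structure on $\bigcup B$ is forced to be a disjoint union of copies of $K_{\Delta,\Delta}$ using exactly the vertices of $S$ and $T$. In the latter case, irreducibility of $\mathcal{G}$ forces $G$ itself to be that disjoint union of $K_{\Delta,\Delta}$'s, contradicting the hypothesis.

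The main obstacle I anticipate is the case analysis at the boundary of the rotation: when the alternating process stalls, one must extract from the stalling configuration the \emph{exact} structural information that it is $\bigsqcup K_{\Delta,\Delta}$ and not merely "degree-$\Delta$-ish". Haxell's original proof only needs a parity/counting contradiction to get \emph{existence}; here, because we are allowed to have many independent transversals and are asking about moving between them, the tight case $|U|=2\Delta$ genuinely admits getting stuck, and the content of the theorem is precisely that this happens \emph{only} for $K_{\Delta,\Delta}$. So I expect the heart of the proof to be a lemma of the form: "if a maximal alternating rotation toward $T$ fails, then the closed neighbourhood of the involved blocks, together with $S$ and $T$ restricted to them, is isomorphic to a disjoint union of $K_{\Delta,\Delta}$'s with $S$ and $T$ the two sides" — and then invoking irreducibility to globalise. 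A secondary, more technical point is bookkeeping: one must ensure the preparatory moves performed during a rotation never undo progress on blocks already aligned with $T$, which I would handle by always rotating "within the tree $B$" and choosing the tree to avoid already-aligned blocks, or by a potential-function argument on $|S\triangle T|$ combined with a secondary measure counting bad blocks.
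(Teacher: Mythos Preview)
Your outline captures the right instincts --- a Haxell-style alternating/augmenting process driven by the degree counting, with the tight case forcing $K_{\Delta,\Delta}$ structure --- but there is a genuine gap at the step ``irreducibility of $\mathcal{G}$ forces $G$ itself to be that disjoint union of $K_{\Delta,\Delta}$'s''. That implication is false in general. When the rotation stalls, what the counting actually yields is a \emph{single} component of $G$ isomorphic to $K_{\Delta,\Delta}$ (with one side lying inside one block), not a global decomposition; and there certainly exist irreducible instances $(G,\mathcal{U})$ in which $G$ has some $K_{\Delta,\Delta}$ components and some non-$K_{\Delta,\Delta}$ components. Irreducibility only gives you an edge crossing between regions of the partition; it does not propagate the $K_{\Delta,\Delta}$ structure to the rest of $G$. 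So your induction on $|S\triangle T|$ gets stuck: the stall does not produce a contradiction, and it does not by itself produce a move decreasing the disagreement.

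The paper resolves this with an idea your proposal does not contain. Having located one $K_{\Delta,\Delta}$ component $K$ with one side $A$ inside a block $U$, the paper performs a \emph{swapping transformation}: it deletes $K$ and redistributes the remaining vertices of $U$ among the blocks that met the other side $B$, producing a new instance $\mathcal{G}'$ with exactly one fewer block; a short lemma shows that adjacent independent transversals of $\mathcal{G}'$ lift to reconfigurable independent transversals of $\mathcal{G}$. The induction is then on $|\mathcal{U}|$, not on $|S\triangle T|$, and the hypothesis that $G$ is not a union of $|\mathcal{U}|$ copies of $K_{\Delta,\Delta}$ is used to guarantee that at least one irreducible piece of $\mathcal{G}'$ still fails the $K_{\Delta,\Delta}$ condition, so the inductive hypothesis applies there. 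Two further technical points you would need: the paper allows reconfiguring \emph{both} $S$ and $T$ simultaneously (tracking a minimal triple $(S,T,\mathbf v)$ under a lexicographic order), which sidesteps your bookkeeping worry about preparatory moves undoing progress; and irreducibility is used not to globalise the $K_{\Delta,\Delta}$ structure but rather, via a separate ``precolouring'' lemma, to guarantee that a minimal triple with $S\ne T$ and $S\cap T\ne\emptyset$ must carry a nonempty augmenting sequence.
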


Note that if $\mathcal{U}$ is a $(2\Delta+1)$-thick partition of $V(G)$, then the hypothesis is trivially satisfied, immediately yielding one of our main claims.
By contrast, clearly there are $(2\Delta)$-thick examples where the conclusion fails when the hypothesis fails. A simple example is a single copy of $K_{\Delta,\Delta}$ with a single-block partition, but there are more intricate examples, and it would be an interesting problem to find a characterisation.

As an aside, we find it curious to juxtapose \autoref{thm: main} and a result of Aharoni, Holzman, Howard, Spr\"ussel~\cite{Aharoni2015}:
if $G$ is a graph of maximum degree $\Delta$ and $\mathcal{U}$ is a $(2\Delta-1)$-thick partition of $V(G)$, then if $G$ contains fewer than $2\Delta-1$ disjoint copies of $K_{\Delta,\Delta}$, then there exists an independent transversal of $(G,\mathcal{U})$.
The proof of \autoref{thm: main} builds conceptually upon one of the (combinatorial) proofs of \autoref{thm: Haxell} (see~\cite{Haxell2020}), even though it needs much more technical care. To aid the comparison, we give below a proof of \autoref{thm: Haxell} in our setup/notation as a warmup. (As a bonus, this warmup ensures that with respect to \autoref{thm: main} our paper is logically self-contained.)

At the end of the paper, we offer a few thoughts on further research directions leading on directly from \autoref{thm: main}.

\section{Proof of the main theorem}

In this section we will refer to a pair $\mathcal{G} = (G,\mathcal{U})$, where $\mathcal{U}$ is a $(2\Delta(G))$-thick partition of $V(G)$, as an \emph{instance}. For the rest of this section we will, given an instance, write $\Delta$ instead of $\Delta(G)$, which, to avoid trivialities, we will assume to be strictly positive.

We refer to the elements of $\mathcal{U}$ as \emph{blocks}. For $\mathcal{R} \subseteq \mathcal{U}$ we define $V_\mathcal{R} = \bigcup_{U \in \mathcal{R}} U$, we let $G_{\mathcal{R}}$ denote the induced subgraph $G[V_\mathcal{R}]$, and we let $\mathcal{G}_\mathcal{R}$ denote the instance $(G_\mathcal{R},\mathcal{R})$. For a subset of vertices $X \subseteq V(G)$ we let $\mathcal{U}_X$ denote the set of blocks that intersect $X$, i.e. $\mathcal{U}_X = \{U \in \mathcal{U}: U \cap X \neq  \emptyset\}$, and we define the \emph{closure} of $X$ as $\cl(X) = V_{\mathcal{U}_X}$. Finally we let $X_\mathcal{R} = X \cap V_\mathcal{R}$ and for a single block $U \in \mathcal{U}$ we write $X_U$ to mean $X_{\{U\}}$.

Given two independent transversals $S,T \in \IT(\mathcal{G})$ we say that $S$ and $T$ \emph{agree} on a block $U\in \mathcal{U}$ if $S_U = T_U$ and we say that $S$ and $T$ agree on a subset $\mathcal{R} \subseteq \mathcal{U}$ if $S$ and $T$ agree on every block in $\mathcal{R}$. Given a vertex $v \in V(G)$ for which $\{v\} \cup S$ is an independent set we let $S \oplus v$ denote the independent transversal $(S \setminus S_U) \cup \{v\}$, where $U$ is the block containing $v$. Note that $S$ and $S \oplus v$ are adjacent (or equal) independent transversals in the reconfigurability graph.

We proceed by proving an extremal version of \autoref{thm: main}.

\begin{lemma}
    \label{lem: disagreement_all_classes}
    Let $\mathcal{G} = (G,\mathcal{U})$ be an instance and suppose $S,T \in \IT(\mathcal{G})$ cannot be reconfigured to agree on any block. Then $G$ is isomorphic to the disjoint union of $|\mathcal{U}|$ copies of $K_{\Delta,\Delta}$.
\end{lemma}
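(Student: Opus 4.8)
The plan is to analyze the structure forced by the hypothesis that $S$ and $T$ cannot be reconfigured to agree on \emph{any} block. The natural starting point is a minimality/extremality argument: I would consider the set $\mathcal{R} \subseteq \mathcal{U}$ of blocks on which $S$ and $T$ currently disagree (under the hypothesis this is all of $\mathcal{U}$, but it will be useful to work relative to sub-instances $\mathcal{G}_\mathcal{R}$ and argue about minimal counterexamples). For each block $U$, write $s_U = S_U$ and $t_U = T_U$ for the two (distinct, non-adjacent-forbidden) vertices. The key intuition, borrowed from the combinatorial proof of Haxell's theorem via alternating/augmenting structures, is that if we try to move $T$ towards $S$ one vertex at a time, the obstruction is an "independent-transversal-style blocker": attempting to set block $U$ to $s_U$ is blocked by some neighbour of $s_U$ lying in the current transversal, and chasing these blockers builds a Haxell-type alternating structure that must saturate.

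First I would show that for every block $U$, the vertex $s_U$ has a neighbour in $T$ and, symmetrically, $t_U$ has a neighbour in $S$ — otherwise we could reconfigure to agree on $U$ immediately (recall the adjacency relation only requires the swapped-in vertex to be non-adjacent to the rest, and that $s_U, t_U$ are non-adjacent by the definition of $S \sim T$ is not assumed here, so one must be a little careful). Next, I would run the Haxell augmenting-path argument: build a maximal sequence of blocks $U_1, U_2, \dots$ where the attempt to install $s_{U_1}$ is blocked by a vertex in block $U_2$, and so on, and argue that the union of closures of the blocks encountered, together with a counting argument using $|U| \ge 2\Delta$ and $\Delta$-regularity constraints, forces each $U_i$ to look exactly like one side/part of a $K_{\Delta,\Delta}$. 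The degree bound $\Delta$ means each blocker vertex kills at most $\Delta$ candidate vertices in the next block; combined with $|U| \ge 2\Delta$ this is exactly tight when the blocking structure is a perfect bipartite pattern, which is where $K_{\Delta,\Delta}$ must appear, and tightness everywhere forces $|U| = 2\Delta$ and $G[U \cup U']$ to be precisely $K_{\Delta,\Delta}$ for the "partner" block $U'$.

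The main obstacle I anticipate is \textbf{ruling out larger or asymmetric alternating structures}: a priori the chain of blocked moves could wander through many blocks before closing up, and one must show that the only way for \emph{every} block to be permanently stuck is that the dependency structure decomposes into disjoint pairs, each pair inducing exactly $K_{\Delta,\Delta}$ with $S$ and $T$ sitting on opposite parts. This requires showing (i) the "blocks you can't fix" form a union that is closed under the blocking relation in both directions, (ii) a parity/counting argument pinning $|U| = 2\Delta$ and full bipartite adjacency, and (iii) that two such $K_{\Delta,\Delta}$-pairs cannot be entangled with each other or with the rest of $G$ without creating slack that would permit a reconfiguration step — this last point presumably uses irreducibility implicitly or is handled by noting any edge leaving a saturated $K_{\Delta,\Delta}$ pair would push some vertex degree above $\Delta$ or free up a move. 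I would also double-check the subtle requirement in the adjacency relation that $S_U$ and $T_U$ be non-adjacent: in a $K_{\Delta,\Delta}$ with $S$ on one part and $T$ on the other, $s_U$ and $t_U$ \emph{are} adjacent, so the one-step swap is genuinely forbidden, consistent with non-reconfigurability; conversely any deviation from the complete bipartite pattern creates a non-neighbour to swap to, which is the crux of why $K_{\Delta,\Delta}$ is forced.
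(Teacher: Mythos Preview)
Your plan is substantially more complicated than the paper's, and it rests on a structural picture that is not quite right. The paper's proof avoids augmenting sequences entirely for this lemma and instead uses one global counting step. The key observation you are missing is this: \emph{every} vertex $v\in V(G)$, not just the $s_U$'s and $t_U$'s, must be adjacent to some vertex of $S\cup T$, because otherwise both $S$ and $T$ can be reconfigured to $S\oplus v$ and $T\oplus v$, which agree on the block containing $v$. That single observation immediately gives
\[
2\Delta|\mathcal{U}|\le |V(G)| \le \sum_{u\in S\cup T}\deg_G(u)\le \Delta\cdot|S\cup T|=2\Delta|\mathcal{U}|,
\]
forcing every inequality to be tight: every block has size exactly $2\Delta$, every vertex in $S\cup T$ has degree exactly $\Delta$, and the neighbourhoods $\{N_G(u):u\in S\cup T\}$ partition $V(G)$. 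In particular $S\cup T$ induces a perfect matching between $S$ and $T$, which lifts to a permutation $\sigma$ of $\mathcal{U}$. The paper then repeats the argument with $S\oplus s$ and $T\oplus t$ for arbitrary $s\in N_G(T)\cap U$ and $t\in N_G(S)\cap\sigma(U)$ to conclude that $N_G(T)\cap U$ is completely joined to $N_G(S)\cap\sigma(U)$; another size count finishes.

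Your picture of ``disjoint pairs of blocks, each pair inducing a $K_{\Delta,\Delta}$'' is incorrect: the $|\mathcal{U}|$ copies of $K_{\Delta,\Delta}$ are indexed by the permutation $\sigma$, and each block is split into two halves belonging to two (generally different) copies. In particular $s_U$ and $t_U$ need not be adjacent when $\sigma(U)\ne U$; what blocks the swap $T\mapsto T\oplus s_U$ is that $s_U$ is adjacent to $t_{\sigma(U)}$, not to $t_U$. The ``main obstacle'' you anticipate, ruling out long alternating chains, is precisely what the global counting sidesteps, and your sketch gives no mechanism for resolving it.
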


\begin{proof}{}
    Any $v \in V(G)$ must be adjacent to at least one vertex in $S \cup T$ since otherwise $S$ and $T$ could be reconfigured to $S\oplus v$ and $T \oplus v$ respectively. We therefore find 
    \[
        2\Delta |\mathcal{U}|\leq |V(G)|\leq \sum_{v \in V(G)}|N_G(v) \cap (S \cup T)| = \sum_{u \in S \cup T}|N_G(u)| \leq \Delta \cdot |S \cup T| = 2\Delta |\mathcal{U}|.
    \]
    We find that every inequality must be an equality from which we can conclude that $\{N_G(u): u \in S \cup T\}$ partitions $V(G)$ into neighbourhoods of size $\Delta$. In particular $S \cup T$ must induce a matching between $S$ and $T$, which can be lifted to a permutation $\sigma: \mathcal{U} \to \mathcal{U}$ sending $U$ to the block containing the matched vertex in $T$ of the unique vertex in $S \cap U$.
    
    For $U \in \mathcal{U}$ let $R_U = N_G(T) \cap U$ and $B_U = N_G(S) \cap U$. We claim that any $s \in R_U$ is adjacent to any $t \in B_{\sigma(U)}$. Indeed, observe that $S \oplus s$ and $T \oplus t$ are again independent transversals, and together, by the same reasoning as above, they induce a matching in $G$ which lifts to a permutation $\sigma'$ of $\mathcal{U}$. Because $\sigma$ and $\sigma'$ are equal on $\mathcal{U}\setminus \{U\}$ they must in fact be equal on $\mathcal{U}$ and thus $s$ is adjacent to $t$.

    We can conclude that $|R_U|\leq \Delta$ and $|B_U| \leq \Delta$ for all $U \in \mathcal{U}$. On the other hand we have that $2\Delta \leq |U| = |R_U| + |B_U|$. Therefore $|R_U| = |B_U| = \Delta$ for all $U \in \mathcal{U}$. This shows that $G$ is isomorphic to the disjoint union of $|\mathcal{U}|$ copies of $K_{\Delta,\Delta}$, namely those induced by the parts $(R_U,B_{\sigma(U)})_{U \in \mathcal{U}}$.
\end{proof}

\subsection{Transforming instances}
\label{sec: transforming_instances}
The proof of \autoref{thm: main} that we give is by induction on $|\mathcal{U}|$. The induction step involves transforming an instance $\mathcal{G}$ into a smaller instance $\mathcal{G}'$. We describe this transformation in this section.

Let $\mathcal{G} = (G,\mathcal{U})$ be an instance and suppose that $G$ has a component $K$ isomorphic to $K_{\Delta,\Delta}$ with bipartition $(A,B)$. Suppose furthermore that there is a block $U \in \mathcal{U}$ such that $A \subseteq U$. Let $\mathcal{B} = \mathcal{U}_{B} \setminus \{U\}$ and let $\mathcal{R} = \mathcal{U} \setminus (\{U\} \cup \mathcal{B})$. Observe that 
\[
    \bigcup_{W \in \mathcal{B}} |W \cap B| = \Delta - |U \cap B| = 2\Delta - |U \cap (A \cup B)| \leq |U \setminus (A \cup B)|,
\]
and thus we can partition $U \setminus (A \cup B)$ into sets $\{Y(W): W \in \mathcal{B}\}$ satisfying $|B \cap W| \leq |Y(W)|$. For $W \in \mathcal{B}$ we define $W' = (W \setminus B) \cup Y(W)$, we let $\mathcal{B}' = \{W':W\in\mathcal{B}\}$, and we let $\mathcal{G}' = (G - K, \mathcal{B}' \cup \mathcal{R})$. Observe that $\mathcal{G}'$ is an instance with one fewer block than $\mathcal{G}$. We say that $\mathcal{G}'$ \emph{is obtained by swapping $B$ and $U \setminus A$}. 

We describe a map from the independent transversals of $\mathcal{G}'$ to sets of independent transversals of $\mathcal{G}$, i.e. a map $M: \IT(\mathcal{G}') \to \mathcal{P}(\IT(\mathcal{G}))$. Let $S$ be an independent transversal of $\mathcal{G}'$ and partition $\mathcal{B}'$ into $\mathcal{V}_S'\cup \mathcal{Y}_S'$ where $\mathcal{V}_S' = \{W' \in \mathcal{B}': S \cap W' \subseteq W\}$ and $\mathcal{Y}_S' = \{W' \in \mathcal{B}':S \cap W' \subseteq Y(W)\}$. If $\mathcal{Y}_S'$ is empty we let \[M(S) = \{\{a\}\cup S: a \in A\}.\] Otherwise we let
\[
    M(S) = \{\{y\}\cup X \cup S_{\mathcal{V}'_S \cup \mathcal{R}}: y \in S_{\mathcal{Y}_S'} \text{ and $X$ a transversal of }\{B \cap W: W' \in \mathcal{Y}_S'\}\}.
\]
Observe that indeed every $T\in M(S)$ is a transversal of $\mathcal{U}$ and $\bigcup_{T \in M(S)} T$ is an independent set of $G$. It follows that $M$ is a well-defined map from $\IT(\mathcal{G}')$ to $\mathcal{P}(\IT(\mathcal{G}))$ and that any pair in $M(S)$ is reconfigurable to each other. See \autoref{fig: swapping} for an example.
\begin{lemma}
    \label{lem: reconfig_transform}
Let $S_1,S_2 \in \IT(\mathcal{G}')$ be adjacent (within $\mathcal{G}'$) independent transversals. Then any independent transversal in $M(S_1)$ is reconfigurable (within $\mathcal{G}$) to any independent transversal in $M(S_2)$.
\end{lemma}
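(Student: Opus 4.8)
The plan is to reduce the claim to connecting \emph{one} representative of $M(S_1)$ to \emph{one} representative of $M(S_2)$. Since the excerpt already records that any two elements of a single $M(S)$ are reconfigurable to each other, it suffices to produce $T_1 \in M(S_1)$ and $T_2 \in M(S_2)$ lying in a common component of the reconfigurability graph of $\mathcal{G}$, with a short reconfiguration path between them. Let $Q$ be the unique block of $\mathcal{B}' \cup \mathcal{R}$ on which $S_1$ and $S_2$ disagree, and put $S_i \cap Q = \{q_i\}$. Note that $S_1 \sim S_2$ gives not merely $q_1 \not\sim q_2$ but that $S_1 \cup S_2$ is an independent set of $G$; this, together with the facts that $K$ is a connected component of $G$ (so no vertex of $A \cup B$ is adjacent to a vertex outside $A \cup B$) and that $A$ and $B$ are each independent, will justify every non-adjacency that arises.

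The governing dichotomy is whether $S_1$ and $S_2$ induce the same split of $\mathcal{B}'$ into $\mathcal{V}'$-blocks and $\mathcal{Y}'$-blocks. These coincide unless $Q = W_0' \in \mathcal{B}'$ is a $\mathcal{V}'$-block of one and a $\mathcal{Y}'$-block of the other; after possibly swapping $S_1, S_2$ this means $q_1 \in W_0 \setminus B$ and $q_2 \in Y(W_0) \subseteq U$. In the \emph{aligned} case the two recipes defining $M(S_1)$ and $M(S_2)$ run in parallel. Choose the same vertex of $\mathcal{G}$ to cover block $U$: a common $y \in (S_1)_{\mathcal{Y}'} = (S_2)_{\mathcal{Y}'}$ if $Q \notin \mathcal{Y}'$ and $\mathcal{Y}' \neq \emptyset$; the vertices $q_1$ respectively $q_2$ (which then lie in $U$) if $Q \in \mathcal{Y}'$; a common apex $a \in A$ if $\mathcal{Y}' = \emptyset$. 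Choose the same transversal $X$ of the relevant blocks $B \cap W$, and fill in the remaining coordinates from $S_1$ and $S_2$ respectively. The resulting $T_i \in M(S_i)$ then differ on exactly one block of $\mathcal{G}$ — the lift of $Q$ (which is $Q$ itself or $W_0$) when $Q \notin \mathcal{Y}'$, and block $U$ when $Q \in \mathcal{Y}'$ — with vertices $q_1$ and $q_2$ there, so $T_1 \sim T_2$. Verifying that $T_1, T_2$ are genuine independent transversals is routine from the three facts above.

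In the \emph{misaligned} case the naive lifts of $S_1$ and $S_2$ disagree on two blocks, $U$ and $W_0$, so a true bridge is needed. Let $T^\ast$ be the set covering $U$ by $q_2$, covering $W_0$ by $q_1$, covering each remaining $\mathcal{Y}'$-block by a fixed transversal $X_0$ of the corresponding blocks $B \cap W$ (with $X_0 = \emptyset$ if there are none), and covering every other block by the common restriction of $S_1$ and $S_2$. Then $T^\ast$ is an independent transversal of $\mathcal{G}$: it meets each block exactly once by design, and it is independent since $q_1 \not\sim q_2$, neither $q_1$ nor $q_2$ lies in $V(K)$, $X_0 \subseteq B$ is independent and has no neighbour among the part of $T^\ast$ outside $V(K)$, and the rest lies inside $S_1 \cup S_2$. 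Furthermore $T^\ast$ is adjacent to a suitable member of $M(S_1)$: they agree off block $U$, where $T^\ast$ has $q_2$ and the member has either $a \in A$ (with $a \not\sim q_2$, as $q_2 \notin V(K)$) or some $y_0 \in (S_1)_{\mathcal{Y}'}$ (with $y_0 \not\sim q_2$, since $y_0 \in S_1$, $q_2 \in S_2$ and $S_1 \cup S_2$ is independent). Symmetrically $T^\ast$ is adjacent to a member of $M(S_2)$, agreeing off block $W_0$, where it has $q_1$ against a vertex of $B \cap W_0 \subseteq V(K)$. Together with the internal connectivity of $M(S_1)$ and $M(S_2)$, this gives the required reconfiguration.

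I expect the misaligned case to be the main obstacle, along with the bookkeeping it forces throughout: for each block of $\mathcal{G}$ one must identify the vertex of the constructed set covering it, confirm uniqueness, and verify the single-differing-block-with-non-adjacent-vertices condition — complicated by the fact that the definition of $M(S)$ branches on whether $\mathcal{Y}'_S$ is empty, so the apex vertices of $A$ must be treated separately. The saving grace is that every non-adjacency reduces to one of the three facts isolated at the outset, so once the case split and the bridge $T^\ast$ are in place, what remains is long but mechanical.
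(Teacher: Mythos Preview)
Your proof is correct and takes essentially the same approach as the paper's: reduce to connecting one representative from each $M(S_i)$, split on whether the change of block crosses between the $\mathcal{V}'$-part and the $\mathcal{Y}'$-part, and in the misaligned case pass through an intermediate transversal built from $q_1$, $q_2$, and a vertex of $B\cap W_0$. The paper's write-up is slightly more economical---it fixes an arbitrary $T_1\in M(S_1)$ and reaches $M(S_2)$ by applying $\oplus w$ and (when needed) $\oplus b$ directly, which in the case $\mathcal{Y}'_{S_1}\neq\emptyset$ takes only one step---but your explicit bridge $T^\ast$ is exactly the paper's intermediate $T_1\oplus w$ in the remaining case, and the verification is identical.
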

\begin{proof}{}
    It is sufficient to show that there are independent transversals $T_1\in M(S_1)$ and $T_2 \in M(S_2)$ such that $T_1$ is reconfigurable to $T_2$ within $\mathcal{G}$.

    Let $v, w \in V(G-K)$ such that $S_2 = S_1 \setminus \{v\} \cup \{w\}$ and let $T_1 \in M(S_1)$. If both $v,w$ are elements of a block in $\mathcal{R}$ or if $v,w$ are either both in $Y(W)$ or both in $W' \setminus Y(W)$ for some $W \in \mathcal{B}$ then $T_1 \oplus w \in M(S_2)$. We will thus assume that there is a block $W \in \mathcal{B}$ such that $v \in W' \setminus Y(W)$ and $w \in Y(W)$. We take $b \in B\cap W$.

    If $\mathcal{Y}_{S_1}'$ is empty then $T_1 \to T_1\oplus w \to (T_1\oplus w)\oplus b$ is a valid reconfiguration from an independent transversal in $M(S_1)$ to an independent transversal in $M(S_2)$. If $\mathcal{Y}_{S_1}'$ is not empty then $T_1 \to T_1 \oplus b$ is a valid reconfiguration from an independent transversal in $M(S_1)$ to an independent transversal in $M(S_2)$.
\end{proof}

\begin{figure}[ht!]
    \centering
    \begin{tikzpicture}[scale=0.95]
        \draw (0,0) rectangle (1.25,4);
        \draw (-1.5,0) rectangle (-0.25,4);
        \draw (-3,0) rectangle (-1.75,4);
        \draw (-4.5,0) rectangle (-3.25,4);
        \draw (-6,0) rectangle (-4.75,4);
        \draw (-6.75,-1) rectangle (2,5);
        \node at (-2.375,5.5) {\large$\mathcal{G}$};

        \draw (3,0) rectangle (4.25,4);
        \draw (4.5,0) rectangle (5.75,4);
        \draw (6,0) rectangle (7.25,4);
        \draw (7.5,0) rectangle (8.75,4);
        \draw (2.25,-1) rectangle (9.5,5);
        \node at (5.625,5.5) {\large$\mathcal{G}'$};

        \fill (1.1,0.25) circle (0.075);
        \node at (1.4,0.25) {\small$y_3$};
        
        \draw (0,2) -- (1.25,2);
        \draw (0,0.5) -- (1.25, 0.5);
        \draw (0,1) -- (1.25, 1);
        \draw (0,1.5) -- (1.25, 1.5);
        \node at (0.5625,0.25) {\small$Y(W_3)$};
        \node at (0.5625,0.75) {\small$Y(W_2)$};
        \node at (0.5625,1.25) {\small$Y(W_1)$};
        \node at (0.5625,1.75) {\small$B$};
        \node at (0.5625,3) {\small$A$};
        \node at (0.5625,-0.35) {$U$};
        
        \draw (-1.5,0.5) -- (-0.25, 0.5);
        \node at (-1,0.25) {\small$B$};
        \fill (-0.5,0.25) circle (0.075);
        \node at (-0.875,-0.35) {$W_3$};

        \draw (-3,0.5) -- (-1.75, 0.5);
        \node at (-2.5,0.25) {\small$B$};
        \fill (-2,0.25) circle (0.075);
        \node at (-2.375,-0.35) {$W_2$};
        
        \draw (-4.5,0.5) -- (-3.25, 0.5);
        \node at (-3.875,0.25) {\small$B$};
        \node at (-3.875,-0.35) {$W_1$};
        \fill (-3.875,1.5) circle (0.075);
        \node at (-3.575,1.5) {\small$v_1$};

        \fill (-5.375,1.25) circle (0.075);
        \node at (-5.075,1.25) {\small$v_0$};

        \draw[decorate,decoration={brace,amplitude=5pt,raise=2pt}] (-4.5,4) -- node[above=6pt] {$\mathcal{B}$} (-0.25,4);

        \draw[decorate,decoration={brace,amplitude=5pt,raise=2pt}] (-6,4) -- node[above=6pt] {$\mathcal{R}$} (-4.75,4);

        \draw (4.5,0.5) rectangle (5.75,0.5);
        \node at (5.125,0.25) {\small$Y(W_1)$};
        \node at (5.25,-0.35) {$W_1'$};
        \fill (5.125,1.5) circle (0.075);
        \node at (5.425,1.5) {\small$v_1$};

        \fill (3.625,1.25) circle (0.075);
        \node at (3.925,1.25) {\small$v_0$};
        
        \draw (6,0.5) rectangle (7.25,0.5);
        \node at (6.5,0.25) {\small$Y(W_2)$};
        \fill (7.1,0.25) circle (0.075);
        \node at (7.4,0.25) {\small$y_2$};
        \node at (6.75,-0.35) {$W_2'$};
        
        \draw (7.5,0.5) rectangle (8.75,0.5);
        \node at (8,0.25) {\small$Y(W_3)$};
        \fill (8.6,0.25) circle (0.075);
        \node at (8.9,0.25) {\small$y_3$};
        \node at (8.25,-0.35) {$W_3'$};

        \draw[decorate,decoration={brace,amplitude=5pt,raise=2pt}] (4.5,4) -- node[above=6pt] {$\mathcal{B}'$} (8.75,4);

        \draw[decorate,decoration={brace,amplitude=5pt,raise=2pt}] (3,4) -- node[above=6pt] {$\mathcal{R}$} (4.25,4);

        \fill (3.625,3.5) circle (0.075);
        \fill[white] (3.625,3.5) circle (0.05);
        \node at (3.925,3.5) {\small$u_0$};
        
        \fill (5.125,3.25) circle (0.075);
        \fill[white] (5.125,3.25) circle (0.05);
        \node at (5.425,3.25) {\small$u_1$};

        \fill (6.625,3.4) circle (0.075);
        \fill[white] (6.625,3.4) circle (0.05);
        \node at (6.925,3.4) {\small$u_2$};

        \fill (8.125,3.1) circle (0.075);
        \fill[white] (8.125,3.1) circle (0.05);
        \node at (8.425,3.1) {\small$u_3$};

        \fill (-5.375,3.5) circle (0.075);
        \fill[white] (-5.375,3.5) circle (0.05);
        \node at (-5.075,3.5) {\small$u_0$};

        \fill (-3.875,3.25) circle (0.075);
        \fill[white] (-3.875,3.25) circle (0.05);
        \node at (-3.575,3.25) {\small$u_1$};

        \fill (-2.375,3.4) circle (0.075);
        \fill[white] (-2.375,3.4) circle (0.05);
        \node at (-2.075,3.4) {\small$u_2$};

        \fill (-0.875,3.1) circle (0.075);
        \fill[white] (-0.875,3.1) circle (0.05);
        \node at (-0.575,3.1) {\small$u_3$};

        \fill (0.625,3.4) circle (0.075);
        \fill[white] (0.625,3.4) circle (0.05);

    \end{tikzpicture}
    \caption{An example of an instance $\mathcal{G}'$ obtained by swapping $B$ and $U\setminus A$ in $\mathcal{G}$. The sets $S_1 = \{u_0,u_1,u_2,u_3\}$ and $S_2 = \{v_0,v_1,y_2,y_3\}$ are two examples of independent transversals in $\mathcal{G}'$ an element from both $M(S_1)$ and $M(S_2)$ is drawn in $\mathcal{G}$.}
    \label{fig: swapping}
\end{figure}
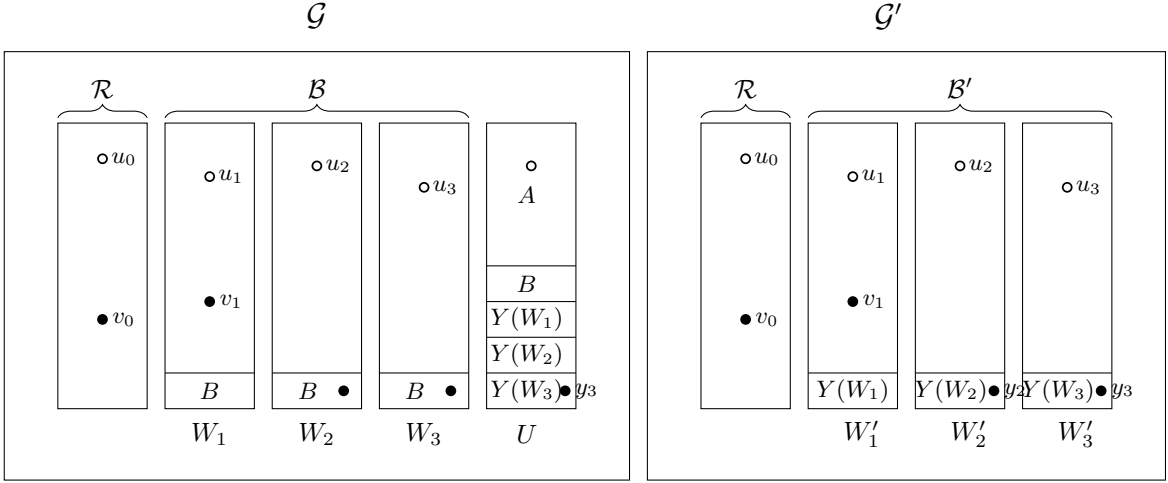

\subsection{Descent using augmenting sequences}
Let $\mathcal{G}=(G,\mathcal{U})$ be an instance, let $S$ be a partial independent transversal of $\mathcal{G}$, i.e. an independent set for which $|S_U| \leq 1$ for all $U \in \mathcal{U}$, and let $C \subseteq V_{\mathcal{U} \setminus \mathcal{U}_{S}}$. Given a sequence of vertices $\vb = (v_1, \dots, v_m)$ we define the sequence of blocks $\mathcal{B}_n$ by $\mathcal{B}_0 = \mathcal{U} \setminus \mathcal{U}_{S}$ and $\mathcal{B}_n = \mathcal{B}_{n-1} \cup \mathcal{U}_{N_S(v_n)}$ for $n=1,\dots, m$ and we define the sequence of vertex sets $C_n$ by $C_0=C$ and $C_n= C_{n-1} \cup N_S[v_n]$ for $n=1,\dots, m$. We define the degree sequence $d(\vb) = (d_1(\vb),\dots,d_m(\vb))$ with $d_k(\vb) = |N_{S}(v_k)|$ for all $k = 1, \dots, m$, and the number of degrees is denoted by $\textrm{length}(\vb)$, that is, $\textrm{length}(\vb) = m$. We say that $\vb$ is \emph{augmenting} for $(S,C)$ if for all $k = 1, \dots, m$ we have 
\begin{enumerate}
    \item $v_k \in V_{\mathcal{B}_{k-1}}$;
    \item $v_k$ is not adjacent to any vertex in $C_{k-1}$; and
    \item $d_k(\vb) > 0$ if $k\neq m$.
\end{enumerate}
The empty sequence, denoted by $\varnothing$, is considered augmenting. We call a tuple $(S,C,\vb)$ \emph{feasible} for $\mathcal{G}$ if $S$ and $C$ are as described above and $\vb$ is augmenting for $(S,C)$.

We introduce a strict partial order on the set of all feasible tuples. We say that $(S,C,\vb) < (S',C',\vb')$ if $|S| > |S'|$ or if $|S'| = |S|$ and there is an index $1 \leq k \leq \textrm{length}(\vb)$ such that $d_i(\vb) = d_i(\vb')$ for all $i < k$ and either $k > \textrm{length}(\vb')$ or $d_k(\vb) < d_k(\vb')$. This condition is equivalent to the following lexicographical inequality of infinite padded sequences
\[
    (d_1(\vb),\dots,d_m(\vb),\infty,\infty,\dots) < (d_1(\vb'),\dots,d_{m'}(\vb'),\infty,\infty,\dots).
\]
We remark that $(S,C, \mathbf{v}) < (S,C,\varnothing)$ for any nonempty sequence $\mathbf{v}$ that is augmenting for $(S,C)$.
It follows directly from the definition that, given a feasible tuple $(S,C,\vb)$, we have
\begin{equation}
    \label{eq: bound_sets}
    |C_k| \leq |C| + k + \sum_{i=1}^k d_i(\vb) 
    \quad
    \text{ and }
    \quad
    |\mathcal{B}_k| = |\mathcal{U} \setminus \mathcal{U}_S| + \sum_{i=1}^k d_i(\vb).
\end{equation}
Haxell's theorem (c.f. \autoref{thm: Haxell}) can be proved via a descent through the space of feasible tuples as is sketched in \cite{Haxell2020}. To familiarise ourselves with the technique we reproduce the argument with our notation.
\begin{proof}[Proof of Haxell's theorem]
    Let $\mathcal{G}=(G,\mathcal{U})$ be an instance and let $(S,\emptyset,\vb)$ with $\vb = (v_1, \dots, v_m)$ be a minimal feasible tuple for $\mathcal{G}$.
    We will prove that $S$ is a full independent transversal. We suppose otherwise, i.e. we suppose that $\mathcal{U}\setminus \mathcal{U}_S$ is not empty.

    We first claim $\vb$ is nonempty and that $d_m(\vb) = 0$. Otherwise 
    \[
        |V_{\mathcal{B}_m}| \geq 2\Delta \left(|\mathcal{U\setminus}\mathcal{U}_S| + \sum_{i=1}^m d_i(\vb)\right) \geq \Delta\left(2|\mathcal{U}\setminus\mathcal{U}_S| + m + \sum_{i=1}^m d_i(\vb)\right) > \Delta \cdot |C_m|.
    \]
    This implies that there is a vertex $v_{m+1}\in V_{\mathcal{B}_m}$ independent of $C_m$ and thus the sequence $\vb' = (v_1, \dots, v_m,v_{m+1})$ is augmenting for $(S,\emptyset)$ and thus $(S,\emptyset,\vb')$ is a strictly smaller feasible tuple than $(S,\emptyset,\vb)$. This contradicts minimality.

    We may thus assume that $m \geq 1$ and $d_m(\vb) = 0$. Let $k$ be the smallest index for which $v_m \in V_{\mathcal{B}_k}$. If $k = 0$ then $v_m \in V_{\mathcal{U} \setminus \mathcal{U}_S}$ and thus $S' = S \cup \{v_m\}$ is strictly larger partial independent transversal than $S$. This implies that $(S',\emptyset, \varnothing) < (S,\emptyset,\vb)$, contradicting minimality. If $k \geq 1$ let $S' = S \oplus v_m$ and $\vb' = (v_1, \dots, v_k)$. Now $\vb'$ is augmenting for $(S', \emptyset)$, $|S'| = |S|$, $d_i(\vb') = d_i(\vb)$ for $i < k$, and $d_k(\vb') = d_k(\vb)-1$. It follows that $(S',\emptyset,\vb') < (S,\emptyset,\vb)$, which again contradicts minimality.
\end{proof}

The proof of the following lemma is very similar. Its corollary will be used in the proof of \autoref{thm: main}.

\begin{lemma}
    \label{lem: precolor2}
    Let $\mathcal{G} = (G,\mathcal{U})$ be an instance with independent transversal $S$. Moreover, let $X \subseteq V(G)$ with $|X| < \Delta$. Then $S$ can be reconfigured to $S'$ satisfying $S' \cap X = \emptyset$. 
\end{lemma}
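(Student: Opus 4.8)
The plan is to adapt, essentially verbatim, the descent used above for \autoref{thm: Haxell}, but anchored at $S$ so that the descent can be read off as a sequence of reconfiguration moves. I would induct on $k := |S \cap X|$, the number of blocks on which $S$ lies in $X$; the case $k = 0$ is trivial. For the induction step, fix a block $U_0$ with $x_0 := S_{U_0} \in X$ and put $P := S \setminus \{x_0\}$, a partial independent transversal whose only unassigned block is $U_0$. It suffices to construct $S^\dagger \in \IT(\mathcal{G})$, reconfigurable from $S$, with $S^\dagger_{U_0} \notin X$ and $S^\dagger \cap X \subseteq (S \cap X) \setminus \{x_0\}$: then $|S^\dagger \cap X| < k$, and applying the induction hypothesis to $S^\dagger$ and composing reconfigurations finishes the proof.

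To build $S^\dagger$, I would run the descent starting from the feasible tuple $(P, \{x_0\}, \varnothing)$, working throughout with the variant of feasibility in which augmenting sequences are additionally required to avoid $X$ (so every $v_k \notin X$). Repeatedly apply a descent move — extend the sequence, swap $Q \mapsto Q \oplus v_m$, or grow $U_0$ (fill it) — exactly as in the warm-up proof, until none is available; this terminates because the feasible tuples in play have bounded $|Q|$ and, by the bounds in \eqref{eq: bound_sets}, bounded-length augmenting sequences. Two points keep the argument intact. First, until $U_0$ is grown, $Q$ has the single unassigned block $U_0$ and the frozen set stays equal to $\{x_0\}$, so $|\mathcal{B}_m| = 1 + \sum_i d_i(\vb)$ and the stuck step calls for a vertex of $V_{\mathcal{B}_m}$ outside $N(C_m) \cup X$; since $|V_{\mathcal{B}_m}| \ge 2\Delta(1 + \sum_i d_i)$ while $|N(C_m)| + |X| \le \Delta(1 + m + \sum_i d_i) + (\Delta - 1)$ and, in the relevant case, $m \le \sum_i d_i$, the first quantity strictly exceeds the second — and this is precisely where the hypothesis $|X| \le \Delta - 1$ is spent. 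The case analysis then runs word for word as for \autoref{thm: Haxell}: a tuple admitting no descent move has $Q$ a full independent transversal, reached by a grow move that placed some $v^* \notin X$ in $U_0$; set $S^\dagger := Q$.

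Second — and this is the delicate point — one must check that the successive partial transversals $P = Q^{(0)}, Q^{(1)}, \dots, Q^{(\ell)}$ arising along the descent, with $Q^{(\ell)} \cup \{v^*\} = S^\dagger$, yield a sequence $S = Q^{(0)} \cup \{x_0\}, Q^{(1)} \cup \{x_0\}, \dots, Q^{(\ell)} \cup \{x_0\}, S^\dagger$ in which consecutive terms are equal or adjacent in the reconfigurability graph. This is exactly what freezing $x_0$ is for: condition (2) of augmenting then forces every swapped-in vertex, and $v^*$ itself, to be non-adjacent to $x_0$, so each $Q^{(i)} \cup \{x_0\}$ genuinely is an independent transversal; each swap changes a single block by a non-edge (the swapped-in vertex is non-adjacent to the current $Q^{(i)}$ and lies outside $U_0$), extend-moves leave $Q^{(i)}$ unchanged, and the final move swaps $x_0$ for $v^*$ in $U_0$. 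Finally $S^\dagger$ differs from $S$ only on $U_0$ and on blocks touched by a swap, and on each of these $S^\dagger$ avoids $X$, so $S^\dagger \cap X \subseteq (S \cap X) \setminus \{x_0\}$, closing the induction. The main obstacle, as this indicates, is reconciling the abstract descent — which a priori assembles a transversal from partial ones — with honest reconfiguration steps between full transversals; the device of keeping $U_0$ as the unique unassigned block and permanently freezing its original occupant $x_0$ in the ``$C$'' set is what achieves this, and is also what makes the counting tight against $|X| = \Delta - 1$. The remainder is a routine transcription of the warm-up proof.
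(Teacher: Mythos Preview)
Your proof is correct and follows essentially the same approach as the paper's: a descent through feasible tuples with augmenting sequences constrained to avoid $X$, where the inequality $|X|<\Delta$ is spent in exactly the same counting step to find the next vertex in $V_{\mathcal{B}_m}\setminus(N_G(C_m)\cup X)$. The only structural difference is that the paper freezes all of $T\cap X$ at once (taking $(T\setminus X,\,T\cap X,\,\vb)$ as the feasible tuple and minimising over all $T$ reachable from $S$), thereby eliminating the need for your outer induction on $|S\cap X|$; your one-vertex-at-a-time version is a harmless repackaging of the same argument.
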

\begin{proof}
    We can reconfigure $S$ to $T$ where $(T\setminus X, T \cap X, \vb)$ is a minimal element within the following set:
     \[
        \{(T\setminus X,T \cap X, \vb): S \text{ can be reconfigured to $T$ and $\vb \cap X = \emptyset$}\}.
    \]
    We claim that $T \cap X = \emptyset$. To show this we assume otherwise and we will find that this contradicts the minimality of $(T\setminus X, T \cap X, \vb)$. We write $\vb = (v_1, \dots, v_m)$.

    Suppose $m \geq 1$ and $d_m(\vb)=0$. Then $v_m$ is independent of $T$ and thus we can reconfigure $T$ to $T'=T\oplus v_m$. If $v_m \in \cl(T \cap X)$ then $T'$ is an independent transversal satisfying $|T'\setminus X | > |T\setminus X|$, contradicting the minimality of $(T\setminus X, T \cap X, \vb)$. Otherwise there is an index $k < m$ such that $v_m \in \mathcal{U}_{N_T(v_k)}$. In this case we let $\vb' = (v_1, \dots, v_k)$ and we observe that $\vb'$ is feasible for $(T'\setminus X, T' \cap X)$ with $d_i(\vb') = d_i(\vb)$ for $i < k$ and $d_k(\vb') = d_k(\vb)-1$ and thus $(T'\setminus X,T' \cap X, \vb') < (T\setminus X,T \cap X, \vb)$.

    We can thus assume that $d_i(\vb) \geq 1$ for $i=1, \dots, m$ (where we now allow $m=0$). Then we see, using equation~{\eqref{eq: bound_sets}}, that 
    \[
        |V_{\mathcal{B}_m}| \geq 2\Delta\left(|T\cap X| + \sum_{i=1}^m d_i(\vb)\right)
        \geq \Delta\left(1 + |T \cap X| + m + \sum_{i=1}^m d_i(\vb)\right)
        > |X| + |N_G(C_m)|.
    \]
    This means that there is a vertex $u \in V_{\mathcal{B}_m}\setminus X$ independent of $C_m$, and thus $\vb' = (v_1, \dots, v_m, u)$ is an augmenting sequence for $(T\setminus X, T \cap X)$ with $(T\setminus X,T \cap X, \vb') < (T\setminus X,T \cap X, \vb)$ contradicting the minimality assumption. This concludes the proof.
\end{proof}

\begin{corollary}
    \label{cor: special reconfiguration2}
    Let $\mathcal{G}=(G,\mathcal{U})$ be an instance with $S,T \in \IT(\mathcal{G})$ and let $\mathcal{R} \subseteq \mathcal{U}$ such that $S,T$ agree on $\mathcal{R}$ and $N_G((S\cup T)\cap V_{\mathcal{U}\setminus \mathcal{R}}) \subseteq V_{\mathcal{U}\setminus \mathcal{R}}$. Suppose $x \in V_{\mathcal{U}\setminus \mathcal{R}}$ has at least one neighbour in $V_{\mathcal{U}\setminus \mathcal{R}}$. Then $(S,T)$ can be reconfigured to $(S_x,T_x)$ such that $S_x$ and $T_x$ agree on $\mathcal{R}$, both $S$ agrees with $S_x$ and $T$ agrees with $T_x$ on $\mathcal{U} \setminus \mathcal{R}$, and $x$ is independent of $(S_x \cup T_x) \cap V_\mathcal{R}$.
\end{corollary}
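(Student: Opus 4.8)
The plan is to carry out the whole reconfiguration inside the sub-instance $\mathcal{G}_{\mathcal{R}} = (G_{\mathcal{R}},\mathcal{R})$ and then transport it back up to $\mathcal{G}$. If $x$ has no neighbour in $V_{\mathcal{R}}$ we are done immediately with $S_x = S$ and $T_x = T$, so assume $X := N_G(x)\cap V_{\mathcal{R}}\neq\emptyset$. Since $x$ has at least one neighbour in $V_{\mathcal{U}\setminus\mathcal{R}}$, we get $|X|\le \deg_G(x)-1\le \Delta-1<\Delta$. Note that $\mathcal{G}_{\mathcal{R}}$ is itself an instance: its maximum degree is at most $\Delta$ and each of its blocks still has size at least $2\Delta$. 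Because $S$ and $T$ agree on $\mathcal{R}$, their common restriction $P := S\cap V_{\mathcal{R}} = T\cap V_{\mathcal{R}}$ is an independent transversal of $\mathcal{G}_{\mathcal{R}}$.

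Next I would apply \autoref{lem: precolor2} to $\mathcal{G}_{\mathcal{R}}$, the independent transversal $P$, and the forbidden set $X$, producing a reconfiguration $P = P^{(0)}\sim P^{(1)}\sim\cdots\sim P^{(\ell)} = P^\ast$ within $\IT(\mathcal{G}_{\mathcal{R}})$ with $P^\ast\cap X = \emptyset$. (Strictly speaking \autoref{lem: precolor2} asks for the forbidden set to be smaller than the maximum degree of the instance it is applied to, which here is $\Delta(G_{\mathcal{R}})\le\Delta$; but its descent argument only needs the blocks to be thick relative to that degree, and the blocks of $\mathcal{R}$ are $2\Delta$-thick, so the proof goes through with $|X|<\Delta$ unchanged.)

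Finally I would lift this sequence to $\mathcal{G}$. Write $S^{\mathrm{out}} = S\cap V_{\mathcal{U}\setminus\mathcal{R}}$ and $T^{\mathrm{out}} = T\cap V_{\mathcal{U}\setminus\mathcal{R}}$, and for an independent transversal $Q$ of $\mathcal{G}_{\mathcal{R}}$ set $\widehat{S}(Q) = S^{\mathrm{out}}\cup Q$ and $\widehat{T}(Q) = T^{\mathrm{out}}\cup Q$. Each $\widehat{S}(Q)$ is an independent transversal of $\mathcal{G}$: it meets every block exactly once, $S^{\mathrm{out}}$ and $Q$ are separately independent, and there is no edge between $Q\subseteq V_{\mathcal{R}}$ and $S^{\mathrm{out}}$ since the hypothesis gives $N_G(S^{\mathrm{out}})\subseteq V_{\mathcal{U}\setminus\mathcal{R}}$; the same holds for $\widehat{T}(Q)$. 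Moreover, if $Q\sim Q'$ in $\mathcal{G}_{\mathcal{R}}$ — adjacency witnessed on a single block of $\mathcal{R}$ with non-adjacent endpoints in $G_{\mathcal{R}}$, hence in $G$ — then $\widehat{S}(Q)\sim\widehat{S}(Q')$ and $\widehat{T}(Q)\sim\widehat{T}(Q')$ in $\mathcal{G}$. Applying this along $P^{(0)},\dots,P^{(\ell)}$ reconfigures $S=\widehat{S}(P)$ to $S_x:=\widehat{S}(P^\ast)$ and $T=\widehat{T}(P)$ to $T_x:=\widehat{T}(P^\ast)$. Then the three conclusions read off directly: $S_x$ and $T_x$ both equal $P^\ast$ on $V_{\mathcal{R}}$, so they agree on $\mathcal{R}$; the outside parts were never touched, so $S_x$ agrees with $S$ and $T_x$ with $T$ on $\mathcal{U}\setminus\mathcal{R}$; and $x$ is independent of $(S_x\cup T_x)\cap V_{\mathcal{R}} = P^\ast$ because $P^\ast\cap N_G(x) = P^\ast\cap X = \emptyset$.

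The only point that really needs care is the lifting step — verifying that splicing the frozen outside parts $S^{\mathrm{out}},T^{\mathrm{out}}$ onto a reconfiguration happening entirely inside $\mathcal{G}_{\mathcal{R}}$ never creates a forbidden edge and never spoils the non-adjacency condition built into the adjacency relation — and this is precisely what the hypothesis $N_G((S\cup T)\cap V_{\mathcal{U}\setminus\mathcal{R}})\subseteq V_{\mathcal{U}\setminus\mathcal{R}}$ is there to provide. Running literally the same sequence of one-vertex moves on $S$ and on $T$ is what guarantees that $S_x$ and $T_x$ agree on $\mathcal{R}$ at the end.
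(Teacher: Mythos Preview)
Your argument is correct and follows essentially the same route as the paper: restrict to the sub-instance $\mathcal{G}_{\mathcal{R}}$, apply \autoref{lem: precolor2} to the common restriction with forbidden set $X=N_G(x)\cap V_{\mathcal{R}}$ (using $|X|<\Delta$ since $x$ has a neighbour outside $V_{\mathcal{R}}$), and then lift back via the hypothesis $N_G((S\cup T)\cap V_{\mathcal{U}\setminus\mathcal{R}})\subseteq V_{\mathcal{U}\setminus\mathcal{R}}$. You are in fact slightly more careful than the paper in flagging the $\Delta(G_{\mathcal{R}})$ versus $\Delta(G)$ issue when invoking \autoref{lem: precolor2}, and in spelling out why the lifted one-vertex moves remain valid in $\mathcal{G}$.
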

\begin{proof}{}
    Observe that $R := S_\mathcal{R} \cap V_\mathcal{R} = T_\mathcal{R} \cap V_\mathcal{R}$ is an independent transversal of $\mathcal{G}_\mathcal{R}$. Let $X = N_G(x) \cap V_\mathcal{R}$ and observe that $|X| < \Delta$. It follows from the previous lemma that $R$ can be reconfigured on $\mathcal{G}_\mathcal{R}$ to $R'$ with $R' \cap X = \emptyset$. Because $N_G((S\cup T)\cap V_{\mathcal{U}\setminus \mathcal{R}}) \subseteq V_{\mathcal{U}\setminus \mathcal{R}}$ this corresponds to a valid reconfiguration of $(S,T)$ to $(S_x,T_x)$.
\end{proof}

\subsection{Reconfiguring independent transversals}

In this section we will prove \autoref{thm: main}. For two sets $X,Y$ we let $X \triangle Y = (X \cup Y) \setminus (X \cap Y)$ denote the symmetric difference of $X$ and $Y$. Given an instance $\mathcal{G}=(G,\mathcal{U})$ with $S_0,T_0 \in \IT(\mathcal{G})$ we define the set 
\[
    {\bf{X}}(S_0,T_0) = \{(S,T,\mathbb{\vb}): \text{$(S_0,T_0)$ is reconfigurable to $(S,T)$ and $\vb$ is augmenting for $(S\cap T, S\triangle T)$}\}.
\]
The set ${\bf{X}}(S_0,T_0)$ is endowed with a strict partial order inherited by the strict partial order on the feasible tuples $(S\cap T, S\triangle T,\vb)$. Observe that $S_0$ can be reconfigured to $T_0$ if and only if the minimal elements of ${\bf{X}}(S_0,T_0)$ are of the form $(S',S',\varnothing)$. Given $(S,T,\mathbf{v}) \in {\bf{X}}(S_0,T_0)$ equation~{\eqref{eq: bound_sets}} simplifies to 
\begin{equation}
    \label{eq: bound_sets_special}
    |C_k| \leq |S \triangle T| + k + \sum_{i=1}^k d_i(\vb)
    \quad
    \text{ and }
    \quad
    |\mathcal{B}_k| = \frac{1}{2}|S \triangle T| + \sum_{i=1}^k d_i(\vb).
\end{equation}

\begin{lemma}
    \label{lem: 2or0 implies smaller seq2}
    Let $\mathcal{G}$ be an instance with $S_0,T_0 \in \IT(\mathcal{G})$ and let $(S,T,\vb) \in {\bf X}(S_0,T_0)$ with $\vb = (v_1, \dots, v_m)$. Suppose $m \geq 1$ and either $d_m(\vb)=0$ or $d_k(\vb) \geq 2$ for some $k$. Then $(S,T,\vb)$ is not minimal in ${\bf X}(S_0,T_0)$.
\end{lemma}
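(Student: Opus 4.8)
The plan is to exhibit, in either case, an element of $\mathbf{X}(S_0,T_0)$ strictly smaller than $(S,T,\vb)$, following the descent idea used above for Haxell's theorem but now carrying the pair $(S,T)$ along via its feasible tuple $(S\cap T,\ S\triangle T,\ \vb)$; recall that here $d_k(\vb)=|N_{S\cap T}(v_k)|$ and that $\mathcal{B}_0$ is the set of blocks on which $S$ and $T$ disagree.

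\emph{Suppose first $d_m(\vb)=0$.} Since $v_m$ is non-adjacent to $C_{m-1}\supseteq C_0=S\triangle T$ (as $\vb$ is augmenting) and has no neighbour in $S\cap T$ (as $d_m(\vb)=0$), the vertex $v_m$ is independent of $(S\cap T)\cup(S\triangle T)=S\cup T$. Using $v_m\in V_{\mathcal{B}_{m-1}}$, let $k\in\{0,\dots,m-1\}$ be minimal with $v_m\in V_{\mathcal{B}_k}$, and let $U$ be the block containing $v_m$. If $k=0$, then $S$ and $T$ disagree on $U$, so $S':=S\oplus v_m$ and $T':=T\oplus v_m$ are independent transversals with $(S_0,T_0)$ reconfigurable to $(S',T')$; they now agree on $U$ and still agree wherever $S$ and $T$ did, so $|S'\cap T'|>|S\cap T|$ and $(S',T',\varnothing)<(S,T,\vb)$. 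If $k\ge 1$, then $U$ contains a vertex of $S\cap T$ adjacent to $v_k$, hence $S$ and $T$ agree on $U$ and their common vertex $w$ there satisfies $w\in N_G(v_k)$; since $v_m\notin N_G(v_k)$ we get $v_m\ne w$. Put $S':=S\oplus v_m$, $T':=T\oplus v_m$ and $\vb':=(v_1,\dots,v_k)$. As $S,T$ are altered only on the agreement block $U$, where $w$ is replaced by $v_m$, we have $S'\triangle T'=S\triangle T$ and $|S'\cap T'|=|S\cap T|$; minimality of $k$ forces $w\notin N_G(v_i)$ for $i<k$ (otherwise $U\in\mathcal{B}_i\subseteq\mathcal{B}_{k-1}$), and $v_m$ non-adjacent to $C_{m-1}$ forces $v_m\notin N_G(v_i)$ for $i\le k$. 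Together these give that $\vb'$ is augmenting for $(S'\cap T',S'\triangle T')$ with $d_i(\vb')=d_i(\vb)$ for $i<k$ and $d_k(\vb')=d_k(\vb)-1$, whence $(S',T',\vb')\in\mathbf{X}(S_0,T_0)$ is strictly smaller than $(S,T,\vb)$.

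\emph{Now suppose $d_m(\vb)\ge 1$.} Then the hypothesis provides some $k$ with $d_k(\vb)\ge 2$, and since $d_i(\vb)\ge 1$ for all $i\le m$ (this being part of $\vb$ augmenting for $i<m$, and our assumption for $i=m$), we get $\sum_{i=1}^m d_i(\vb)\ge m+1$. Combining this with \eqref{eq: bound_sets_special} and $(2\Delta)$-thickness,
\begin{align*}
    |V_{\mathcal{B}_m}| &\ \ge\ 2\Delta\,|\mathcal{B}_m|\ =\ \Delta\,|S\triangle T|+2\Delta\sum_{i=1}^m d_i(\vb)\\
    &\ >\ \Delta\Bigl(|S\triangle T|+m+\sum_{i=1}^m d_i(\vb)\Bigr)\ \ge\ \Delta\,|C_m|\ \ge\ |N_G(C_m)|.
\end{align*}
Hence there is $v_{m+1}\in V_{\mathcal{B}_m}$ with no neighbour in $C_m$, so $\vb':=(v_1,\dots,v_m,v_{m+1})$ is augmenting for $(S\cap T,S\triangle T)$; since $\vb'$ properly extends $\vb$, the tuple $(S,T,\vb')\in\mathbf{X}(S_0,T_0)$ is strictly smaller than $(S,T,\vb)$.

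The only step needing real care is the bookkeeping in the subcase $d_m(\vb)=0$, $k\ge1$: verifying that after the swap on $U$ the tuple stays feasible and that its degree sequence agrees with the old one before coordinate $k$ and decreases by exactly one at coordinate $k$. This is precisely where minimality of $k$ and the non-adjacency requirement in the definition of augmenting are both used. The case $d_m(\vb)\ge1$ is a short counting argument paralleling the descent in Haxell's theorem.
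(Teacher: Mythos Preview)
Your proof is correct and follows essentially the same approach as the paper's: the case $d_m(\vb)=0$ is handled by reconfiguring both $S$ and $T$ via $\oplus v_m$ and truncating $\vb$ at the minimal $k$ with $v_m\in V_{\mathcal{B}_k}$, while the case $d_m(\vb)\ge1$ with some $d_k(\vb)\ge2$ is handled by the same counting to find a $v_{m+1}$ extending $\vb$. You have in fact made the bookkeeping in the $k\ge1$ subcase (why $d_i(\vb')=d_i(\vb)$ for $i<k$ and $d_k(\vb')=d_k(\vb)-1$, via minimality of $k$ and non-adjacency of $v_m$ to $C_{m-1}$) more explicit than the paper does.
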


\begin{proof}{}
    Assume first that $d_m(\vb)=0$. This means that $v_m$ is not adjacent to any element of $S \cup T$ and thus $(S,T)$ can be reconfigured to $(S',T') = (S \oplus v_m, T\oplus v_m)$.

    Let $k$ be the smallest index for which $v_m \in C_k(\vb)$. If $k = 0$ then $v_m \in \cl(S \triangle T)$ and thus $S'$ and $T'$ agree on one more block and thus $(S',T',\emptyset)< (S,T,\vb)$. Otherwise, we see that $v_m \in \cl(N_{S,T}(v_k))$. Now $\vb' = (v_1, \dots, v_k)$ is an augmenting sequence for $(S',T')$ satisfying $d_i(\vb') = d_i(\vb)$ for $i < k$ and $d_k(\vb') = d_k(\vb)-1$, which implies $(S',T',\vb')< (S,T,\vb)$. This proves the statement if $d_m(\vb)=0$. 
    
    Now we assume that $d_m(\vb)\geq 1$ and there is some index $k$ for which $d_k(\vb)\geq 2$. It follows from equation~{\eqref{eq: bound_sets_special}} that 
    \begin{align*}
        \Delta |C_m| \leq \Delta \left(|S \triangle T| + m + \sum_{i=1}^m d_i(\vb)\right)
        < 2\Delta \left(\frac{1}{2}|S \triangle T| + \sum_{i=1}^m d_i(\vb)\right) \leq |V_{\mathcal{B}_m}|.
    \end{align*}
    It follows that there is a vertex $v_{m+1} \in V_{\mathcal{B}_m}$ not adjacent to any vertex in $C_m$. Therefore, letting $\vb'= (v_1, \dots, v_m,v_{m+1})$, we find $(S,T,\vb') \in {\bf X}(S_0,T_0)$ with $(S,T,\vb') < (S,T,\vb)$.
\end{proof}

\begin{lemma}
    \label{lem: nbhs partition}
    Let $\mathcal{G}=(G,\mathcal{U})$ be an instance with $S_0,T_0 \in \IT(\mathcal{G})$ and let $(S,T,\vb)$ be minimal inside ${\bf{X}}(S_0,T_0)$. Then $\{N_G(u): u \in C_m\}$ partitions $V_{\mathcal{B}_m}$, where $m = \rm{length}(\vb)$.
\end{lemma}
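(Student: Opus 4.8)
The plan is to use minimality of $(S,T,\vb)$ twice and then close with a single double-counting chain; write $m=\mathrm{length}(\vb)$ throughout. First I would pin down the degree sequence: by \autoref{lem: 2or0 implies smaller seq2}, minimality forbids both $d_m(\vb)=0$ and $d_k(\vb)\ge 2$ for any $k$, so $d_i(\vb)=1$ for every $i=1,\dots,m$ (vacuously if $m=0$). Hence $\sum_{i=1}^m d_i(\vb)=m$, and \eqref{eq: bound_sets_special} specialises to
\[
|\mathcal{B}_m|=\tfrac12|S\triangle T|+m
\qquad\text{and}\qquad
|C_m|\le |S\triangle T|+2m=2|\mathcal{B}_m|.
\]

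Next I would show that every vertex of $V_{\mathcal{B}_m}$ has a neighbour in $C_m$, via the same ``extend the sequence and contradict minimality'' move used in the proof of Haxell's theorem. If some $v\in V_{\mathcal{B}_m}$ were independent of $C_m$, then $\vb'=(v_1,\dots,v_m,v)$ would be augmenting for $(S\cap T,S\triangle T)$: condition (1) holds at the new final index because $v\in V_{\mathcal{B}_m}$, condition (2) holds there by the assumed independence, and condition (3) --- which now has to be checked at index $m$, since that index is no longer terminal --- holds because $d_m(\vb')=d_m(\vb)=1>0$ when $m\ge 1$ and is vacuous when $m=0$. Then $(S,T,\vb')\in\mathbf{X}(S_0,T_0)$ with $(S,T,\vb')<(S,T,\vb)$, contradicting minimality.

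Finally I would run the count. Every block of $\mathcal{B}_m\subseteq\mathcal{U}$ has at least $2\Delta$ vertices; every vertex of $V_{\mathcal{B}_m}$ has at least one neighbour in $C_m$ by the previous step; $G$ has maximum degree $\Delta$; and $|C_m|\le 2|\mathcal{B}_m|$. Counting incidences between $V_{\mathcal{B}_m}$ and $C_m$ both ways yields
\[
2\Delta|\mathcal{B}_m|\le|V_{\mathcal{B}_m}|\le\sum_{v\in V_{\mathcal{B}_m}}|N_G(v)\cap C_m|=\sum_{u\in C_m}|N_G(u)\cap V_{\mathcal{B}_m}|\le\sum_{u\in C_m}|N_G(u)|\le\Delta|C_m|\le 2\Delta|\mathcal{B}_m|,
\]
so equality holds everywhere. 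Consequently each $v\in V_{\mathcal{B}_m}$ has exactly one neighbour in $C_m$, while for each $u\in C_m$ we get $N_G(u)\subseteq V_{\mathcal{B}_m}$ and $|N_G(u)|=\Delta\ge 1$. Hence the sets $N_G(u)$, $u\in C_m$, are nonempty subsets of $V_{\mathcal{B}_m}$ that pairwise intersect trivially (a shared vertex would have two neighbours in $C_m$) and cover $V_{\mathcal{B}_m}$; that is, $\{N_G(u):u\in C_m\}$ partitions $V_{\mathcal{B}_m}$.

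I do not anticipate a genuine obstacle: this is a leaner cousin of the descent behind Haxell's theorem and \autoref{lem: precolor2}. The points that need care are the augmenting-sequence bookkeeping in the middle step --- especially re-checking condition (3) at the index that switches from terminal to interior --- and confirming that the inequality chain is tight, which rests entirely on the bound $|C_m|\le 2|\mathcal{B}_m|$ coming from \eqref{eq: bound_sets_special}.
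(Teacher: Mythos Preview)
Your argument is correct and follows essentially the same route as the paper: invoke \autoref{lem: 2or0 implies smaller seq2} to force $d_i(\vb)=1$ for all $i$, observe that any $v\in V_{\mathcal{B}_m}$ independent of $C_m$ would allow an extended augmenting sequence contradicting minimality, and then compare $\Delta|C_m|\le 2\Delta|\mathcal{B}_m|\le|V_{\mathcal{B}_m}|$ to conclude the neighbourhoods are tight, disjoint, and covering. The paper's version is terser (it merely ``observes'' the covering and states the inequality once), whereas you spell out the augmenting-sequence check and the full incidence chain, but the substance is identical.
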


\begin{proof}{}
If $S = T$ then $\vb$ is the empty sequence and both $C_0$ and $V_{\mathcal{B}_0}$ are empty and thus the statement is trivially true.

Suppose now $S \neq T$ and write $\vb = (v_1,\dots, v_m)$. It follows from \autoref{lem: 2or0 implies smaller seq2} that $d_i(\vb) = 1$ for all $i$. We also observe that any vertex in $V_{\mathcal{B}_m}$ must be adjacent to some vertex in $C_m$. From equation~{\eqref{eq: bound_sets_special}} we obtain
$\Delta |C_m| \leq \Delta(|S \triangle T| + 2m) \leq |V_{\mathcal{B}_m}|$, which shows that the neighbourhoods of elements in $C_m(\vb)$ must all have size $\Delta$, cannot intersect, and must cover $V_{\mathcal{B}_m}$. 
\end{proof}

\begin{lemma}
    \label{lem: nonempty seq}
    Let $\mathcal{G}=(G,\mathcal{U})$ be an irreducible instance with $S_0,T_0 \in \IT(\mathcal{G})$ and let $(S,T,\vb)$ be minimal inside ${\bf{X}}(S_0,T_0)$. Then either $S \cap T = \emptyset$, $S = T$, or $\vb \neq \varnothing$.
\end{lemma}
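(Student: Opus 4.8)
Here is how I would approach it.

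\medskip

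The plan is to argue by contradiction: suppose $(S,T,\vb)$ is minimal in ${\bf X}(S_0,T_0)$ with $\vb=\varnothing$, $S\neq T$, and $S\cap T\neq\emptyset$, and produce an element of ${\bf X}(S_0,T_0)$ strictly below $(S,T,\varnothing)$. The order-theoretic point I will lean on is that for any $(S',T',\vb')\in{\bf X}(S_0,T_0)$, if $|S'\cap T'|\ge|S\cap T|$ and $\vb'$ is a \emph{nonempty} augmenting sequence, then already $(S',T',\vb')<(S,T,\varnothing)$ (take $k=1$ in the definition of the order). So it suffices to reconfigure $(S,T)$ to a pair $(S',T')$ that agrees on at least as many blocks and for which some vertex lying in a block where $S'$ and $T'$ disagree has no neighbour in $S'\triangle T'$.

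\medskip

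Write $\mathcal{R}=\mathcal{U}_{S\cap T}$ for the set of blocks on which $S$ and $T$ agree and $\mathcal{B}_0=\mathcal{U}\setminus\mathcal{R}$; both are nonempty. Applying \autoref{lem: nbhs partition} to the minimal tuple $(S,T,\varnothing)$ gives that $\{N_G(u):u\in S\triangle T\}$ partitions $V_{\mathcal{B}_0}$; in particular $N_G(S\triangle T)=V_{\mathcal{B}_0}$, every $u\in S\triangle T$ has $|N_G(u)|=\Delta$, and every vertex of $V_{\mathcal{B}_0}$ has exactly one neighbour in $S\triangle T$. Since $N_G(S\triangle T)\subseteq V_{\mathcal{B}_0}$, if there were no edge between $V_{\mathcal{B}_0}$ and $V_{\mathcal{R}}$ then $\mathcal{G}=\mathcal{G}_{\mathcal{B}_0}\cup\mathcal{G}_{\mathcal{R}}$, contradicting irreducibility; so fix an edge $xy$ with $x\in V_{\mathcal{B}_0}$ and $y\in V_{\mathcal{R}}$. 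As $y\notin V_{\mathcal{B}_0}=N_G(S\triangle T)$, the vertex $x$ is not in $S\triangle T$; let $u_x$ be its unique neighbour in $S\triangle T$, which — after possibly interchanging $S$ and $T$ — we may take in $S\setminus T$, and let $W_x\in\mathcal{B}_0$ be the block of $x$, with $s,t$ the vertices of $S,T$ in $W_x$, so $x\notin\{s,t\}$ and $t\in S\triangle T$.

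\medskip

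Now I would make $x$ usable as a replacement vertex in $T$. The hypotheses of \autoref{cor: special reconfiguration2} hold for this $\mathcal{R}$ (we have $(S\cup T)\cap V_{\mathcal{B}_0}=S\triangle T$ with $N_G(S\triangle T)\subseteq V_{\mathcal{B}_0}$) and for this $x$ (it has the neighbour $u_x\in V_{\mathcal{B}_0}$), so $(S,T)$ reconfigures to $(S_x,T_x)$ that coincides with $(S,T)$ on $\mathcal{B}_0$, agrees exactly on $\mathcal{R}$, and has $x$ independent of $(S_x\cup T_x)\cap V_{\mathcal{R}}$. Then $u_x\in S_x\setminus T_x$ is the only neighbour of $x$ in $S_x\cup T_x$, so $x$ is independent of $T_x$ and we may take the further step $(S_x,T_x)\to(S_x,T_x\oplus x)$; as $x\neq s$ this pair still agrees exactly on $\mathcal{R}$, and its disagreement set is $(S\triangle T\setminus\{t\})\cup\{x\}$. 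Finally, since $|N_G(t)|=\Delta$, $|N_G(x)|\le\Delta$ and $y\in N_G(x)\setminus N_G(t)$, we get $N_G(t)\not\subseteq N_G(x)$, so pick $z\in N_G(t)\setminus N_G(x)$. Then $z\in V_{\mathcal{B}_0}$, its unique neighbour in $S\triangle T$ is $t$ (by the partition), and in $(S\triangle T\setminus\{t\})\cup\{x\}$ it has neither $t$ (removed) nor $x$ (since $z\notin N_G(x)$) nor any other neighbour; hence $(z)$ is a nonempty augmenting sequence for $(S_x\cap(T_x\oplus x),\,S_x\triangle(T_x\oplus x))$, so $(S_x,T_x\oplus x,(z))\in{\bf X}(S_0,T_0)$ lies strictly below $(S,T,\varnothing)$ — contradiction.

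\medskip

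The step I expect to be the main obstacle is getting these two preparatory moves right and seeing why they suffice: irreducibility must be ``spent'' on producing the single bridge edge $xy$, and \autoref{cor: special reconfiguration2} is needed precisely to clear $V_{\mathcal{R}}$ out of $N_G(x)$ so that $x$ can legally replace $t$ in $T$ — after which the partition structure from \autoref{lem: nbhs partition} closes the argument, the crux being that reinserting $x$ detaches the whole $\Delta$-set $N_G(t)$ from the disagreement set while $x$ can re-attach at most $\Delta-1$ of those vertices, having already spent one of its $\le\Delta$ edges on $y\in V_{\mathcal{R}}$.
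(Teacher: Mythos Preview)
Your proof is correct and follows essentially the same route as the paper: use \autoref{lem: nbhs partition} to get the partition structure on $V_{\mathcal{B}_0}$, invoke irreducibility to find a bridge vertex $x\in V_{\mathcal{B}_0}$ with a neighbour in $V_{\mathcal{R}}$, apply \autoref{cor: special reconfiguration2} to clear $N_G(x)\cap V_{\mathcal{R}}$ from the agreement set, and then swap $x$ into one of the transversals. The only difference is in the final punchline: you explicitly produce an augmenting vertex $z\in N_G(t)\setminus N_G(x)$ to exhibit a strictly smaller tuple, whereas the paper instead observes that $(S',T',\varnothing)$ is itself minimal (same agreement count), re-applies \autoref{lem: nbhs partition}, and notes that $x\in S'\triangle T'$ having a neighbour in $V_{\mathcal{R}}$ directly contradicts $N_G(x)\subseteq V_{\mathcal{B}_0}$ --- a slightly cleaner close, but logically equivalent to yours.
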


\begin{proof}{}
We suppose that neither $S\cap T$ nor $S \triangle T$ is empty but that $\vb$ is empty. It follows from \autoref{lem: nbhs partition} that $\{N_G(u): u \in S \triangle T\}$ partitions $\cl(S\triangle T)$. Let $\mathcal{R} = \mathcal{U}_{S \cap T}$. Because $\mathcal{G}$ is irreducible there is a vertex $x \in V_{\mathcal{U}\setminus \mathcal{R}}$ that has a neighbour in $V_\mathcal{R}$. This implies that $x \not \in S \cup T$. The vertex $x$ must have a unique neighbour in $S \triangle T$, say in $T$. It follows from \autoref{cor: special reconfiguration2} that $(S,T)$ can be reconfigured to $(S_x,T_x)$ such that $S_x$ and $T_x$ agree on $\mathcal{R}$, both $S$ agrees with $S_x$ and $T$ agrees with $T_x$ on $\mathcal{U} \setminus \mathcal{R}$, and $(S_x \cup T_x) \cap V_\mathcal{R} \cap N_G(x) = \emptyset$. It follows that $S_x$ is independent of $x$ and thus $(S,T)$ can be reconfigured to $(S',T'):= (S_x\oplus x, T_x)$.

The transversals $S'$ and $T'$ agree on exactly the same blocks as $S$ and $T$ and thus $(S',T',\varnothing)$ is minimal in ${\bf{X}}(S_0,T_0)$. This implies that $\{N_G(u): u \in S' \triangle T'\}$ partitions $\cl(S'\triangle T') = \cl(S\triangle T)$. This contradicts the fact that $x$ has a neighbour in $V_{\mathcal{R}}$, which concludes the proof.
\end{proof}

We are now ready to prove the main theorem, which we restate for convenience. 
\mainTheorem*

\begin{proof}{}

The proof is by induction on $|\mathcal{U}|$. For $|\mathcal{U}| = 1$ the statement follows from \autoref{lem: disagreement_all_classes}. We will thus assume that $|\mathcal{U}| > 1$ and that the statement holds for instances with strictly fewer than $|\mathcal{U}|$ blocks. Let $S_0,T_0 \in \IT(\mathcal{G})$ and let $(S,T,\vb)$ be a minimal element in ${\bf X}(S_0,T_0)$ and write $\vb = (v_1, \dots, v_m)$. If $S=T$ we are done and thus we will assume that $S \neq T$.

It follows from \autoref{lem: disagreement_all_classes} that $|S \cap T|$ is not empty. By \autoref{lem: 2or0 implies smaller seq2} and \autoref{lem: nonempty seq} it follows that $\vb$ is not empty and $d_i(\vb)=1$ for $i=1, \dots, m$. By \autoref{lem: nbhs partition} $\{N_G(u): u \in C_m\}$ is a partition of $V_{\mathcal{B}_m}$. Let $x_i$ be the unique neighbour of $v_i$ in $S \cup T$. Note that $C_m$ can be written as the disjoint union of $S \triangle T$, $\{v_1, \dots, v_m\}$, and $\{x_1, \dots, x_m\}$ and the degree of any of these vertices is exactly $\Delta$.

Let $\mathcal{R} = \mathcal{U} \setminus \mathcal{B}_m$. It follows from \autoref{cor: special reconfiguration2} that, given an $x \in N_G(v_i)$, $(S,T)$ can be reconfigured to $(S_x,T_x)$ such that $S_x$ and $T_x$ both contain $x$, still agree on $\mathcal{R}$, and not only $S$ agrees with $S_x$ but also $T$ agrees with $T_x$ on every block in $\mathcal{B}_m$ except possibly the block containing $x$. 

For $i = 1, \dots, m$ let $X_i$ be the block containing $x_i$. We claim that $N_G(v_m) \subseteq X_m$. Indeed, suppose that there is some $x \in N_G(v_m)$ with $x \not\in X_m$. Then either $x \in \cl(S\triangle T)$, which would imply $|S_x \triangle T_x| < |S \triangle T|$ contradicting the minimality of $(S,T, \vb)$ within ${\bf X}(S_0,T_0)$. Or $x \in X_k$ for some $k < m$. But then $\vb' := (v_1, \dots, v_k)$ is an augmenting sequence for $(S_x\cap T_x,S_x \triangle T_x)$ with $d_k(\vb') = 0$ and thus $\vb' < \vb$. This again contradicts minimality.

Next we claim every $x \in N_G(v_m)$ has the same set of neighbours. To see this, take such an $x$ and observe that $\vb$ is an augmenting sequence for $(S_x\cap T_x,S_x \triangle T_x)$ with $d_{i}(\vb) = 1$ for all $i$. By \autoref{lem: nbhs partition} 
\[
    \{N_G(u): u \in (S \triangle T)\cup \{v_1, \dots, v_m\} \cup \{x_1, \dots, x_{m-1},x\}\} 
\]
is a partition of $V_{\mathcal{U} \setminus \mathcal{R}}$. This shows that $N_G(x)$ does not depend on $x$. We conclude that the edge $x_mv_m$ lies in a component $K$ of $G$ isomorphic to $K_{\Delta,\Delta}$. We refer to the sides containing $x_m$ and $v_m$ as $A$ and $B$ respectively and we recall that we showed that $A \subseteq X_m$.

Let $Y = X_m \setminus (A \cup B)$ and let $\mathcal{G}' = (G - K, \mathcal{U}')$ be an instance obtained from swapping $B$ and $X_m \setminus A$; see Section~\ref{sec: transforming_instances}. Recall that there is a bijection $\mathcal{U} \setminus \{X_m\} \to \mathcal{U}'$ by sending $W$ to $W'$ with $W=W'$ if $W \cap B = \emptyset$ and $W' = (W \setminus B) \cup Y(W)$ otherwise. The independent transversals $(S,T)$ of $\mathcal{G}$ restrict to independent transversals $(S_{\mathcal{G}'},T_{\mathcal{G}'})$ of $\mathcal{G}'$. We claim that any irreducible component of $\mathcal{G}'$ contains a block on which $S_{\mathcal{G}'}$ and $T_{\mathcal{G}'}$ disagree. Observe that for $R \in \mathcal{R}$ we have $R=R'$ and thus, since $\mathcal{G}$ is irreducible, any irreducible component of $\mathcal{G}'$ contains a block in $\mathcal{U}'\setminus \mathcal{R}$. Let $\mathcal{G}'_C$ be such a component and assume it does not contain any block on which $S_{\mathcal{G}'}$ and $T_{\mathcal{G}'}$ disagree. Since it must contain a block in $\mathcal{U}'\setminus \mathcal{R}$ it must contain a block $X_i'$ for some $i \in [m-1]$ and we may assume that $i$ is chosen minimally. By definition either $v_i \in X_j$ for $j < i$ or $v_i \in \cl(S\triangle T)$ within $\mathcal{G}$, and since the edge $v_ix_i$ is not an edge of $K$ this would imply that either $v_i \in X_j'$ or $v_i \in \cl(S_{\mathcal{G}'} \triangle T_{\mathcal{G}'})$ within $\mathcal{G}'$. Since $i$ was chosen minimally this contradicts the fact that $\mathcal{G}'_C$ is a component that does not contain any block on which $S_{\mathcal{G}'}$ and $T_{\mathcal{G}'}$ disagree, which proves the claim.

Because $G$ is not a disjoint union of $|\mathcal{U}|$ copies of $K_{\Delta,\Delta}$ there must be a component $\mathcal{G}_C' = (G_C, \mathcal{U}_C')$ of $\mathcal{G}'$ for which $G_C$ is not the disjoint union of $|\mathcal{U}_C'|$ copies of $K_{\Delta,\Delta}$. By induction we can reconfigure $T_{\mathcal{G}'}$ to $S_{\mathcal{G}'}$ on this component. This corresponds to a reconfiguration from $T_{\mathcal{G}'}$ to $\tilde{T}_{\mathcal{G}'}$ on $\mathcal{G}'$ for which $|T_{\mathcal{G}'} \cap S_{\mathcal{G}'}| < |\tilde{T}_{\mathcal{G}'} \cap S_{\mathcal{G}'}|$. The independent transversal $\tilde{T}_{\mathcal{G}'}$ agrees with either $S_{\mathcal{G}'}$ or $T_{\mathcal{G}'}$ on every block of $\mathcal{G}'$ and thus it does not intersect $Y$. By \autoref{lem: reconfig_transform} it follows that $T$ can be reconfigured to $\tilde{T}$ within $\mathcal{G}$ that agrees with $S$ on $X_m$ and on every block in $U \in \mathcal{U}\setminus \{X_m\}$ for which $S_{\mathcal{G}'}$ and $\tilde{T}_{\mathcal{G}'}$ agree on $U'$. It follows that 
\[
    |S \cap \tilde{T}|  = |S_{\mathcal{G}'} \cap \tilde{T}_{\mathcal{G}'}| + 1
                        > |S_{\mathcal{G}'} \cap T_{\mathcal{G}'}| + 1
                        = |S \cap T|.
\]
This contradicts the assumption that the pair $S,T$ was reconfigured to agree on a maximal number of blocks.
\end{proof}

\section{Concluding remarks}
Our contribution in this work was to conceive of and establish \autoref{thm: main}, which one may view as a qualitative strengthening, or a reconfiguration-analogue, of Haxell's theorem. From this result there open up numerous intriguing lines for further research, which we briefly outline.
\begin{itemize}
\item 
Since the class of graphs formed from the disjoint union of copies of $K_{\Delta,\Delta}$ seems eminently tractable, \autoref{thm: main} hints temptingly at the feasibility of the following problem. What is an exact characterisation (for part size $t=2\Delta$) of those irreducible vertex-partitioned $\Delta$-regular graphs for which the reconfigurability graph of independent transversals is disconnected?
\item
What are the best upper bounds, as a function of $|\mathcal{U}|$ and $\Delta$ on the diameter of the reconfigurability graph of $\mathcal{G}$ in the conclusion of \autoref{thm: main}? Similarly, what are the best diameter upper bounds for the reconfigurability graph under the assumption that $|\mathcal{U}|$ is $(2\Delta+1)$-thick?
\item
We presented a Markov chain, which by \autoref{thm: main} settles into an equilibrium distribution over all admissible configurations. When does it mix rapidly, that is, polynomially fast as a function of $|V(H)|$?
Could it be shown provided $t$ is large enough in comparison to $\Delta$, say, some constant factor $C\ge2$ larger? Put another way, is there some independent transversals analogue of Jerrum's result on Glauber dynamics for proper colourings of bounded degree graphs~\cite{Jer95}?
\end{itemize}

We referred in passing to the fact that independent transversals nontrivially generalise proper colourings of graphs. There are interesting notions intermediate to proper colourings and independent transversals, where one could also pursue a study analogous to ours. In fact, \autoref{thm: main} validates all such studies. We highlight two such (related) notions, especially as they may demand techniques different from those we used here.

\begin{itemize}
\item
Let $H$ be a graph. Suppose that to each vertex $u\in V(H)$ we associate a list $L(u)\subseteq \mathbb{Z}^+$ of $t$ positive integers. Let us furthermore restrict ourselves to such $L$ satisfying for each $u$ and each $c \in L(u)$ that $|\{v\in N_H(u) \, \mid \, L(v) \ni c\}| \le \Delta$, that is, the maximum {\em colour-degree} is bounded by $\Delta$. A proper $L$-colouring of $H$ is a function $f: V(H) \to \mathbb{Z}^+$ that makes a choice $f(u)\in L(u)$ for each $u$ from its own list so that $f(u)\ne f(v)$ for any edge $uv\in V(H)$. 
Reed and Sudakov~\cite{ReSu02} showed that some $t$ satisfying $t=\Delta+o(\Delta)$ as  $\Delta\to\infty$ suffices to always guarantee a proper $L$-colouring under the above setup, while Bohman and Holzman~\cite{BoHo02} showed that $t=\Delta+1$ does {\em not} suffice. What is the best choice of $t=t(\Delta)$ that guarantees connectedness in the space of proper $L$-colourings (changing the choice for one vertex of $H$ at a time) under this setup?
\item
Given a graph $G$, we say that a partition $\mathcal{U}$ of $V(G)$ has maximum {\em local degree} $s$ if the maximum degree of any subgraph induced by two of the parts is at most $s$. Loh and Sudakov~\cite{LoSu07} showed that provided $s=s(\Delta)$ satisfies $s=o(\Delta)$ as $\Delta\to\infty$, there is some choice of $t=t(\Delta)$ satisfying $t=\Delta+o(\Delta)$ as $\Delta\to\infty$ such that the following holds. For any graph $G$ of maximum degree $\Delta$, and any $t$-thick partition $\mathcal{U}$ of maximum local degree $s$, there is guaranteed to be an independent transversal of $(G,\mathcal{U})$. Note this generalises the aforementioned result of Reed and Sudakov, as that could be cast in terms of independent transversals under the condition of maximum local degree $1$. As before, what is the best choice of $t=t(\Delta)$ that guarantees connectedness in the space of independent transversals under this setup?
\end{itemize}
For each of these, \autoref{thm: main} shows that we can choose $t\le2\Delta$, but is there some $1<c<2$ for which we can always choose $t \le c\Delta$ for all sufficiently large $\Delta$? Could it be any such $c$? And similarly, for each, one might separately investigate bounds on the threshold for rapid mixing (should one exist).

To cap things off, let's remind the reader of our first sentence in the paper. \autoref{thm: main} and the methods for proving it suggest the following supersaturation-type strengthening of Haxell's theorem.

\begin{conjecture}
Let $G$ be a graph with maximum degree $\Delta$ and let $\mathcal{U}$ be a $(2\Delta)$-thick partition of $V(G)$. The number of independent transversals of $(G,\mathcal{U})$ is minimised when $G$ is a disjoint union of $|\mathcal{U}|$ copies of $K_{\Delta,\Delta}$. 
\end{conjecture}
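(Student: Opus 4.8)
The plan is to prove $|\IT(\mathcal G)|\ge 2\Delta^{|\mathcal U|}$ by strong induction on $n:=|\mathcal U|$; this is the value attained by an irreducible disjoint union of copies of $K_{\Delta,\Delta}$ whose ``side/block'' incidence structure forms a single cycle, so the bound is sharp and such instances are the conjectured extremisers. Three preliminary reductions come first. Since a lower bound is only helped by shrinking blocks, and deleting a vertex from an oversized block can neither increase the number of independent transversals nor spoil $(2\Delta)$-thickness, we may assume every block has size exactly $2\Delta$. Next, if $\mathcal G=\mathcal G_1\cup\mathcal G_2$ is reducible then $|\IT(\mathcal G)|=|\IT(\mathcal G_1)|\cdot|\IT(\mathcal G_2)|\ge 4\Delta^n$ by induction, strictly more than required; hence we may assume $\mathcal G$ is irreducible and, moreover, any extremal instance is irreducible. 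Finally, the base case $n=1$ is immediate: every single vertex of the unique block is an independent transversal, so $|\IT(\mathcal G)|=|U|=2\Delta$.

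For the inductive step with $n\ge 2$ and $\mathcal G$ irreducible, the strategy follows the skeleton of the proof of \autoref{thm: main}. If $G$ is a disjoint union of copies of $K_{\Delta,\Delta}$ one argues directly: with every block of size $2\Delta$ and irreducibility tying the components together, the feasibility constraints among the blocks form a cyclic system of implications, and a direct (if somewhat case-heavy) computation should give exactly $2\Delta^n$ in the tightest configuration and at least $2\Delta^n$ in general. Otherwise, the aim is to locate a component $K\cong K_{\Delta,\Delta}$ one of whose sides $A$ lies inside a single block $U$, to pass to the instance $\mathcal G'=(G-K,\mathcal U')$ obtained by swapping $B$ and $U\setminus A$ as in Section~\ref{sec: transforming_instances} (which has $n-1$ blocks), and then to prove an \emph{enumerative} sharpening of \autoref{lem: reconfig_transform}: the sets $M(S)$ for $S\in\IT(\mathcal G')$, suitably adjusted, should form a partition of $\IT(\mathcal G)$ in which every part has size at least $\Delta$. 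The $A$-coordinate already yields parts of size exactly $\Delta$ among those $T\in\IT(\mathcal G)$ with $T_U\in A$ (replacing $T_U$ by any other vertex of $A$ is legal, and the resulting group of $\Delta$ independent transversals descends bijectively to one of $\mathcal G'$), so the substance is to organise the independent transversals that use a vertex of $B$ or of $U\setminus(A\cup B)$. Granting all this, $|\IT(\mathcal G)|=\sum_S|M(S)|\ge\Delta\cdot|\IT(\mathcal G')|\ge\Delta\cdot 2\Delta^{n-1}=2\Delta^n$.

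I expect two points to be the real obstacles. The first is the enumerative sharpening of the swap just outlined: \autoref{lem: reconfig_transform} records only a reconfiguration relation among the images $M(S)$, and upgrading it to a clean partition of all of $\IT(\mathcal G)$ with parts of size $\ge\Delta$ looks delicate — in particular the independent transversals placing a vertex in $B$ are not in the image of $M$ as defined, and the natural ``re-fattening'' maps handling them need not be injective, so $M$ would have to be reworked with exact multiplicities in mind. The second, and I suspect deeper, obstacle is that the swap is simply unavailable for irreducible instances with no $K_{\Delta,\Delta}$ component at all — for instance a cycle $C_{2\Delta n}$ cut into $n$ arcs of length $2\Delta$, and more generally any instance ``far'' from the extremal configuration — so the inductive scheme must be supplemented by a separate estimate there. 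One route is to exploit \autoref{thm: main} itself: when $G$ is not a disjoint union of copies of $K_{\Delta,\Delta}$ the reconfigurability graph is connected, and one would hope to combine this with a lower bound on the number of one-vertex moves available at a typical independent transversal (in the extremal instance each has exactly $n(\Delta-1)$ neighbours) to force exponentially many independent transversals; but turning such heuristics into a proof — quantifying how slack in the degree and thickness hypotheses must be paid for in extra independent transversals — seems to need a genuinely new ingredient, perhaps an entropy-compression or algorithmic-Lovász-Local-Lemma style analysis of the descent underlying \autoref{thm: Haxell}. I would not be surprised if the full conjecture required tools beyond the reconfiguration toolkit developed here, although the exact determination of the extremal value, together with the cases accessible to the swap, should already follow from the plan above.
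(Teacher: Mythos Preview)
This statement is a \emph{conjecture} in the paper, not a theorem: the authors explicitly leave it open and offer no proof. So there is nothing in the paper to compare your argument against, and your proposal should be read as an attack on an open problem rather than a reconstruction of an existing proof.

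On its own merits, your proposal is an honest research outline rather than a proof, and you have correctly flagged the two places where it breaks. The deeper one is decisive: the swap of Section~\ref{sec: transforming_instances} requires a component $K\cong K_{\Delta,\Delta}$ with one side contained in a single block, and for a generic irreducible instance there is no reason such a component exists. In the proof of \autoref{thm: main} this structure was not assumed but \emph{produced} from the minimality of $(S,T,\vb)$ in ${\bf X}(S_0,T_0)$; that machinery is tied to a fixed pair of independent transversals and does not obviously yield anything for a counting problem. Your cycle example already shows the inductive scheme simply does not start in that regime, and the suggested fallback via connectivity of the reconfigurability graph plus a degree estimate is, as you say, heuristic: connectedness alone gives no exponential lower bound on the number of vertices. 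Even the ``direct'' case where $G$ is a disjoint union of copies of $K_{\Delta,\Delta}$ is more than a computation, since the sides of the components can be threaded through the blocks in many ways and the claimed bound $2\Delta^{|\mathcal U|}$ is not established.

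The enumerative sharpening of the swap is also a genuine issue, though a more local one. As defined, the images $M(S)$ neither cover $\IT(\mathcal G)$ (independent transversals using a vertex of $B$ in $U$ are missed) nor are they pairwise disjoint, and their sizes in the $\mathcal Y_S'\neq\emptyset$ case are $|S_{\mathcal Y_S'}|\cdot\prod_{W'\in\mathcal Y_S'}|B\cap W|$, which need not be at least $\Delta$. Repairing this into an honest $\Delta$-to-one (or better) surjection $\IT(\mathcal G)\to\IT(\mathcal G')$ would be a real contribution, but it would still only handle instances where the swap is available. In short: the reductions and base case are fine, the inductive skeleton is natural, but the argument as written has a structural gap (no $K_{\Delta,\Delta}$ component) that the paper's toolkit does not close, and the conjecture remains open.
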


More specifically, it could be that the hypothesis of \autoref{lem: disagreement_all_classes} characterises the extremal structure, and so it would suffice to show that the number of independent transversals must be at least $2 \Delta^{|\mathcal{U}|}$.

\paragraph{Acknowledgements.} 
This project began during a visit by the authors in the summer of 2023 to the Department of Mathematics, Hokkaido University. We are grateful to Akira Sakai for hosting that visit.
PB was supported by the Dutch Research Council (NWO) grant OCENW.M20.009.
RJK was partially supported by the Gravitation Programme NETWORKS (024.002.003) of NWO.

\paragraph{Open access statement.} For the purpose of open access,
a CC BY public copyright license is applied
to any Author Accepted Manuscript (AAM)
arising from this submission.

\bibliographystyle{alphaurl}
\bibliography{biblio}

\end{document}